\DeclareFontFamily{U}{mathx}{\hyphenchar\font45}
\DeclareFontShape{U}{mathx}{m}{n}{
      <5> <6> <7> <8> <9> <10>
      <10.95> <12> <14.4> <17.28> <20.74> <24.88>
      mathx10
      }{}
\DeclareSymbolFont{mathx}{U}{mathx}{m}{n}
\DeclareMathAccent{\widecheck}{0}{mathx}{"71}
\DeclareMathAccent{\wideparen}{0}{mathx}{"75}
\theoremstyle{plain}
\newtheorem{thm}{Theorem}[section]
\newtheorem{lem}[thm]{Lemma}
\newtheorem{prop}[thm]{Proposition}
\newtheorem{thm*}{Theorem}
\theoremstyle{definition}
\newtheorem{dfn}[thm]{Definition}
\newtheorem{rem}[thm]{Remark}
\newtheorem{que}[thm]{Question}
\numberwithin{equation}{section}
\DeclareMathOperator{\Hom}{Hom}
\DeclareMathOperator{\End}{End}
\DeclareMathOperator{\Ext}{Ext}
\renewcommand{\mod}{\mathop{\mathsf{mod}}\nolimits}
\DeclareMathOperator{\Mod}{\mathsf{Mod}}
\DeclareMathOperator{\add}{\mathsf{add}}
\DeclareMathOperator{\gen}{\mathsf{gen}}
\DeclareMathOperator{\proj}{\mathsf{proj}}
\DeclareMathOperator{\ind}{\mathsf{ind}}
\DeclareMathOperator{\Sim}{\mathsf{sim}}
\DeclareMathOperator{\Fac}{\mathsf{gen}}
\DeclareMathOperator{\sub}{\mathsf{sub}}
\newcommand{\xto}[1]{\xrightarrow{#1}}
\DeclareMathOperator{\Cok}{Cok}
\renewcommand{\Im}{\mathop{\mathrm{Im}}}
\DeclareMathOperator{\irr}{irr}
\DeclareMathOperator{\projdim}{proj\text{.}dim}
\DeclareMathOperator{\Dim}{\underline{dim}}
\DeclareMathOperator{\rank}{rank}
\DeclareMathOperator{\urank}{\underline{\rank}}
\DeclareMathOperator{\Spec}{Spec}
\newcommand{\supp}{\operatorname{Supp}\nolimits}
\DeclareMathOperator{\GL}{GL}
\DeclareMathOperator{\diag}{diag}
\DeclareMathOperator{\M}{M}
\newcommand{\ov}[1]{\overline{#1}}
\renewcommand{\epsilon}{\varepsilon}
\renewcommand{\L}{\Lambda}
\newcommand{\bP}{\mathbb{P}}
\newcommand{\bT}{\mathbb{T}}
\newcommand{\bZ}{\mathbb{Z}}
\newcommand{\m}{\mathfrak{m}}
\newcommand{\p}{\mathfrak{p}}
\newcommand{\q}{\mathfrak{q}}
\newcommand{\cA}{\mathcal{A}}
\newcommand{\cB}{\mathcal{B}}
\newcommand{\cH}{\mathcal{H}}
\newcommand{\cI}{\mathcal{I}}
\newcommand{\cO}{\mathcal{O}}
\newcommand{\cP}{\mathcal{P}}
\newcommand{\cR}{\mathcal{R}}
\newcommand{\cS}{\mathcal{S}}
\newcommand{\cT}{\mathcal{T}}
\newcommand{\cU}{\mathcal{U}}
\newcommand{\cX}{\mathcal{X}}
\newcommand{\cY}{\mathcal{Y}}
\newcommand{\sD}{\mathsf{D}}
\newcommand{\sK}{\mathsf{K}}
\newcommand{\sP}{\mathsf{P}}
\newcommand{\sT}{\mathsf{T}}
\newcommand{\tors}{\mathop{\mathsf{tors}}}
\newcommand{\torf}{\mathop{\mathsf{torf}}}
\newcommand{\ftors}{\mathop{\mathsf{f}\text{-}\mathsf{tors}}}
\newcommand{\ntors}{\mathop{\mathsf{nf}\text{-}\mathsf{tors}}}
\newcommand{\excep}{\mathop{\mathsf{excep}}}
\newcommand{\brick}{\mathop{\mathsf{brick}}}
\newcommand{\twosilt}{\mathop{\mathsf{2}\text{-}\mathsf{silt}}}
\newcommand{\Clus}{\mathop{\mathsf{Clus}}}
\begin{document}
\title[Torsion classes of extended Dynkin quivers over commutative rings]{Torsion classes of extended Dynkin quivers \\ over commutative rings}
\author[O. Iyama]{Osamu Iyama}
\author[Y. Kimura]{Yuta Kimura}
\address{Osamu Iyama : Graduate School of Mathematical Sciences, The University of Tokyo, 3-8-1 Komaba Meguro-ku Tokyo 153-8914, Japan}
\email{iyama@ms.u-tokyo.ac.jp}
\address{Yuta Kimura: Osaka Central Advanced Mathematical Institute, Osaka Metropolitan University, 3-3-138 Sugimoto, Sumiyoshi-ku Osaka 558-8585, Japan}
\email{yutakimura@omu.ac.jp (Previous)}
\email{y.kimura.4r@cc.it-hiroshima.ac.jp (Current)}
\keywords{Noetherian algebra, torsion class, extended Dynkin quiver, Dedekind domain.}
\begin{abstract}
For a Noetherian $R$-algebra $\L$, there is a canonical inclusion $\tors\L\to\prod_{\p\in\Spec R}\tors(\kappa(\p)\L)$, and each element in the image satisfies a certain compatibility condition. We call $\L$ \emph{compatible} if the image coincides with the set of all compatible elements. For example, for a Dynkin quiver $Q$ and a commutative Noetherian ring $R$ containing a field, the path algebra $RQ$ is compatible. In this paper, we prove that $RQ$ is compatible when $Q$ is an extended Dynkin quiver and $R$ is either a Dedekind domain or a Noetherian semilocal normal ring of dimension two.
\end{abstract}
\maketitle
\tableofcontents
\section{Introduction}\label{section-introduction}
Let $R$ be a commutative Noetherian ring and $Q$ a finite acyclic quiver.
The aim of this paper is to develop the classification problem of torsion classes of $\mod RQ$: the category of finitely generated modules over the path algebra $RQ$.
Such algebra is a kind of \emph{Noetherian algebras} (also called \emph{module-finite $R$-algebras}), that is, an $R$-algebra which is finitely generated as an $R$-module.
Tilting and silting theory of Noetherian algebras have been widely studied \cite{CB, Gabriel, Gnedin, Iyama-Reiten, Iyama-Wemyss-maximal, Kimura, Stanley-Wang, Takahashi}.
There are several results for classification problems of torsion classes and tilting modules of $kQ$ over a field $k$ \cite{AnSa, Buan}.

Let $\L$ be a Noetherian $R$-algebra.
A subcategory $\cT$ of $\mod\L$ is called a \emph{torsion class} if it is closed under taking factor modules and extensions.
We denote by $\tors\L$ the set of all torsion classes of $\mod\L$.
When $\L=R$, there exists a Gabriel-type bijection between $\tors R$ and the set of specialization closed subsets of $\Spec R$ \cite{Gabriel, Stanley-Wang}. For a Noetherian algebra $\L$, the structure of $\tors\L$ is much richer, and often categorifies important combinatorial structure such as cluster algebras \cite{Ingalls-Thomas} and Coxeter groups \cite{DIRRT}.

For a prime ideal $\p$ of $R$, let $\kappa(\p)=R_\p/\p R_\p$, $\L_\p=R_\p\otimes_R \L$. In \cite[Theorem 3.16]{Iyama-Kimura}, the authors constructed a canonical injective map
\[
\Phi_{\rm t} : \tors\L \longrightarrow \bT_R(\L):=\prod_{\p\in\Spec R}\tors(\kappa(\p)\otimes_R\L), \quad \cT \mapsto (\kappa(\p)\otimes_R\cT)_{\p\in\Spec R},
\]
where $\kappa(\p)\otimes_R\cT:=\{\kappa(\p)\otimes_R X \mid X\in\cT\}$ is a torsion class of $\mod(\kappa(\p)\otimes_R\L)$.
Therefore it is important to give an explicit description of the image of this map.
Let us recall a necessary condition for elements in $\bT_R(\L)$ to belong to $\Im\Phi_{\rm t}$: 
For prime ideals $\p\supseteq \q$ of $R$, we have an order preserving map
\begin{eqnarray*}
&{\rm r}_{\p\q} : \tors(\kappa(\p)\otimes_R\L) \longrightarrow \tors(\kappa(\q)\otimes_R\L), \quad \cT \mapsto \kappa(\q)\otimes_{R_\p}\psi_\p(\cT), \mbox{ where}&\\
&\psi_\p(\cT):=\{X \in \mod\L_\p \mid \kappa(\p)\otimes_{R_\p}X \in \cT\}.&
\end{eqnarray*}
An element $(\cX^\p)_\p$ of $\bT_R(\L)$ is called \emph{compatible} if ${\rm r}_{\p\q}(\cX^\p) \supseteq \cX^\q$ holds for any prime ideals $\p\supseteq \q$ of $R$. Then any element of $\Im\Phi_{\rm t}$ is compatible \cite[Proposition 3.20]{Iyama-Kimura}.
We say that a Noetherian algebra is \emph{compatible} if the converse holds, that is, $\Im\Phi_{\rm t}$ coincides with the set of all compatible elements of $\bT_R(\L)$.
In this case, $\tors\L$ can be described in terms of torsion classes of a family $(\kappa(\p)\otimes_R\L)_{\p\in\Spec R}$ of finite dimensional algebras
\[
\tors\L \simeq \left\{(\cX^\p)_\p\in \bT_R(\L)  \,\middle|\,\ {\rm r}_{\p\q}(\cX^\p) \supseteq \cX^\q,\,{}^{\forall}\p \supseteq \q \in \Spec R\right\}.
\]
Therefore the following question posed in \cite{Iyama-Kimura} is important.

\begin{que}
Characterize Noetherian algebras which are compatible.
\end{que}

Some classes of Noetherian algebras are known to be compatible. For example, if $R$ is a semilocal ring of dimension one, then any Noetherian $R$-algebra is compatible \cite[Theorems 1.5]{Iyama-Kimura}.
Also, if $R$ is a commutative Noetherian ring $R$ containing a field $k$, then the $R$-algebra $R\otimes_kA$ is compatible for each finite dimensional $k$-algebra $A$ which is $\tau$-tilting finite \cite[Corollary 1.8]{Iyama-Kimura}.

On the other hand, for a Dynkin quiver $Q$ and a commutative Noetherian ring $R$, it is known that $RQ$ is compatible and there exists an isomorphism of posets \[ \tors RQ \simeq \Hom_{\rm poset}(\Spec R, \Clus(Q)) \]
by \cite[Theorem 4.1]{IK2}, where $\Clus(Q)$ is the poset of clusters of $Q$ \cite[Definition 3.1]{IK2}.
This generalizes Ingalls-Thomas correspondence \cite{Ingalls-Thomas} when $R$ is a field.
The aim of this paper is to prove a similar result for extended Dynkin quivers $Q$.
Precisely speaking, we prove the following result, where for $n\ge0$, we say that $R$ satisfies ($R_n$) if $R_\p$ is regular for each prime ideal $\p$ of $R$ with height at most $n$ \cite{Bruns-Herzog}.

\begin{thm}\label{thm-compatible-extDynkin}
Let $R$ be a commutative Noetherian ring and satisfying the following conditions.
\begin{enumerate}[\rm(I)]
\item $R$ satisfies \emph{($R_1$)}.
\item $R$ has only finitely many prime ideals of height $2$.
\end{enumerate}
Then, for each extended Dynkin quiver $Q$, the $R$-algebra $RQ$ is compatible.
Thus we have an isomorphism of posets
\[\tors RQ \simeq \left\{(\cX^\p)_\p\in \bT_R(RQ)  \,\middle|\, {\rm r}_{\p\q}(\cX^\p) \supseteq \cX^\q,\,{}^{\forall}\p \supseteq \q \in \Spec R\right\},\ \cT\mapsto(\kappa(\p)\otimes_R\cT)_{\p\in\Spec R}.\]
\end{thm}

The condition (II) implies $\dim R\le 2$ by Krull's Hauptidealsatz,  \cite[Theorem 144]{Kaplansky}.
For example, the conditions (I) and (II) are satisfied by the following two classes of commutative Noetherian rings.
\begin{enumerate}[\rm(a)]
\item Dedekind domains.
\item Semilocal normal rings of dimension $2$.
\end{enumerate}
To prove Theorem \ref{thm-compatible-extDynkin}, we give 
the following characterizations of functorially finite torsion classes of $\mod kQ$ for a field $k$, which is interesting by itself.
We say that $\cT\in\tors kQ$ is \emph{upper finite} (respectively, \emph{lower finite}) if an interval $[\cT, \mod kQ]$ (respectively, $[0, \cT]$) in $\tors kQ$ is a finite set.
We denote by $\cP$ (respectively, $\cR, \cI$) the subcategory of preprojective (respectively, regular, preinjective) $kQ$-modules, by $\ftors kQ$ the set of all functorially finite torsion classes of $\mod kQ$, and by $\ntors kQ:=\tors kQ \setminus \ftors kQ$ the set of all non-functorially finite torsion classes of $\mod kQ$, see subsection \ref{subsection-fft} for details.

\begin{thm}[Theorem \ref{thm-ch-ftors-ext-Dynkin}]
Let $Q$ be an extended Dynkin quiver and $k$ a field.
For $\cT\in\tors kQ$, the following conditions are equivalent.
\begin{enumerate}[{\rm (i)}]
	\item $\cT\in\ftors kQ$.
	\item $\cT$ is either upper finite or lower finite.
	\item Either $\cT\cap\cP\neq 0$ or $\cT^{\perp}\cap\cI\neq 0$ holds (see Conventions below for $(-)^\perp$).
	\item $\cT$ is not contained in an interval $[\cI, \cI\vee \cR]$ in $\tors kQ$.
\end{enumerate}
In particular, we have $\ntors kQ = [\cI, \cI \vee \cR]$ and $\tors kQ = \ftors kQ \sqcup [\cI, \cI \vee \cR]$.
\end{thm}

We also prove the following result, where $\cH$ is the full subcategory of $\cR$ consisting of all homogeneous tubes.

\begin{thm}[Theorem \ref{upper not lower}]
Let $Q$ be an extended Dynkin quiver and $k$ a field, and $\cT\in\tors kQ$.
\begin{enumerate}[\rm(a)]
\item $\cT$ is upper finite if and only if $\cT\cap\cP\neq 0$. Moreover $\cT$ is upper finite and lower infinite if and only if $\ind(\cT\cap\cP)$ is an infinite set if and only if $\cT\cap\cP\neq0$ and $\cH\vee\cI\subset\cT$.
\item $\cT$ is lower finite if and only if $\cT^\perp\cap\cI\neq 0$. Moreover $\cT$ is lower finite and upper infinite if and only if $\ind(\cT^\perp\cap\cI)$ is an infinite set if and only if $\cT^\perp\cap\cI\neq0$ and $\cP\vee\cH\subset\cT^\perp$.
\end{enumerate}
\end{thm}

In section \ref{section-pre-Noe}, we give preliminary results on Noetherian algebras.
Among others, methods of reverse filtrations (e.g.\ Proposition \ref{prop-filt-eq}) play an important role in this paper.
In section \ref{section-path-Noe}, we give preliminaries on path algebras over Noetherian algebras, including a family of modules corresponding to real Schur roots \eqref{prop-cluster-stilt}, and derived Auslander-Reiten translations.
In subsection \ref{subsection-rpq-PRI}, we prove several properties of the map ${\rm r_{\p\q}}$, and prove Theorem \ref{thm-compatible-extDynkin} in subsection \ref{subsec-proof}.
In section \ref{section-tors-Kro}, we give an explicit description of $\tors RQ$ for the path algebra $RQ$ of the Kronecker quiver $Q$ over a Dedekind domain $R$.

\medskip
\noindent{\bf Conventions.} 
Let $\cA$ be an additive category.
For a subcategory  $\cB$ (or a collection of objects) of $\cA$, we denote by $\add\cB$ the full subcategory of $\cA$ consisting of direct summands of finite direct sums of objects in $\cB$.
Let $\cB^{\perp}=\{X\in\cA \mid \Hom_{\cA}(B,X)=0,\,{}^{\forall}B\in\cB\}$.
Similarly, ${}^{\perp}\cB$ is defined.
We denote by $\ind\cA$ the set of isomorphism classes of indecomposable objects in $\cA$.

Let $\cA$ be an abelian category and $\cB$ a subcategory (or a collection of objects) of $\cA$.
We denote by $\gen\cB$ the full subcategory of $\cA$ consisting of factor objects of objects in $\add\cB$.
We denote by $\Sim\cA$ the set of isomorphism classes of simple objects in $\cA$.

An order preserving map $f : X \to Y$ between posets is called an \emph{embedding of posets} if $f$ is injective and a partial order on $X$ coincides with the restriction of a partial order on $Y$.
Namely, $f$ is an embedding of posets if and only if $f(a) \leq f(b)$ implies $a\leq b$ for any $a, b\in X$.

For a set $X$, we denote by $\sP(X)$ the power set of $X$.
\section{Path algebras of extended Dynkin quivers}
Throughout this section, let $Q$ be an extended Dynkin quiver and $k$ a field.
\subsection{Preliminaries on extended Dynkin quivers}
Let $Q$ be an extended Dynkin quiver and $k$ be a field.
We denote by
\[\tau:\mod kQ\to\mod kQ\]
the Auslander-Reiten translation, and by
\[\cP\ \mbox{ (respectively, $\cR, \cI$)}\]
the subcategory of preprojective (respectively, regular, preinjective) $kQ$-modules.
For subcategories $\cA$ and $\cB$ of $\mod kQ$, let $\cA\vee \cB:=\add(\cA, \cB)$. Then $\mod kQ=\cP\vee\cR\vee\cI$ holds.
Moreover $\Hom_{kQ}(\cR,\cP)=0$, $\Hom_{kQ}(\cI,\cR)=0$ and $\Hom_{kQ}(\cI,\cP)=0$ hold.

The category $\cR$ of regular $kQ$-modules is an extension closed abelian subcategory of $\mod kQ$.
A regular module is called a \emph{regular simple module} if it has no proper regular submodule. For a module $M\in\cR$, the \emph{regular length} of $M$ is a length of composition series of $M$ with respect to regular simple modules.

By \cite[Theorems 3.5 and 4.1]{Dlab-Ringel}, the category $\cR$ decomposes into tubes.
We denote by $\cH$ a product of homogeneous (i.e. rank one) tubes, and denote by $\cR_{\geq 2}$ a product of tubes having ranks $\geq 2$. Then we have
\[\cR = \cH \vee \cR_{\geq 2}.\]
Moreover $\cR_{\geq 2}$ is a product of at most three tubes \cite[Theorem 4.1]{Dlab-Ringel}, and $\Hom_{kQ}(\cH,\cR_{\geq 2})=0$ and $\Hom_{kQ}(\cR_{\geq 2},\cH)=0$ hold. 
More explicitly, we have a decomposition of an abelian category
\begin{equation}\label{R=VRx}
\cR=\bigvee_{x\in|\bP_k^1|}\cR_x,
\end{equation}
where $|\bP_k^1|$ is the set of the closed point of the projective line $\bP_k^1$ over $k$.
Let $h_x$ be the number of isomorphism classes of simple objects in $\cR_x$.
Then we have $\cH=\bigvee_{x\in|\bP_k^1|_1}\cR_x$ and $\cR_{\ge2}=\bigvee_{x\in|\bP_k^1|_{\ge2}}\cR_x$, where $|\bP_k^1|_1$ (respectively, $|\bP_k^1|_{\ge2}$) is the set of $x\in|\bP_k^1|$ with $h_x=1$ (respectively, $h_x\ge2$). Moreover, the Auslander-Reiten quiver of $\cR$ is a disjoint union of those of $\cR_x$'s, which are tubes of rank $h_x$.
Let $\cO_x$ be the local ring of the structure sheaf $\cO_{\bP_k^1}$ at $x$, and $\m_x$ its maximal ideal.
Then, for a hereditary $\cO_x$-order in $M_{h_x}(\cO_x)$, we have an equivalence
\[\cR_x\simeq\mod_0\left[\begin{smallmatrix}
\cO_x&\cO_x&\cdots&\cO_x\\
\m_x&\cO_x&\cdots&\cO_x\\
\vdots&\vdots&\ddots&\vdots\\
\m_x&\m_x&\cdots&\cO_x
\end{smallmatrix}\right],\]
where $\mod_0$ denotes the category of finite length modules.

To clarify the base field $k$, we use the symbols $\cP^k$, $\cR^k$, $\cI^k$, $\cH^k$ and $\cR_{\geq 2}^k$.
\subsection{Characterizations of functorially finite torsion classes}\label{subsection-fft}
In this subsection we show Theorem \ref{thm-ch-ftors-ext-Dynkin}.

Let $A$ be a finite dimensional algebra over a field $k$. 
A torsion class $\cT$ of $\mod A$ is said to be \emph{functorially finite} if for any $kQ$-module $X$, there exists a morphism $f : X \to T$ with $T\in\cT$ such that any morphism from $X$ to a module in $\cT$ factors through $f$.
We denote by $\ftors A$ the set of all functorially finite torsion classes of $\mod A$, and by $\ntors A:=\tors A\setminus \ftors A$ the set of all non-functorially finite torsion classes of $\mod A$.
It is known that there exists a bijection from the set of isomorphism classes of support $\tau$-tilting $A$-modules to $\ftors A$ given by $M\mapsto \gen M$, see \cite{Adachi-Iyama-Reiten} for details. We call $X\in\mod A$ a \emph{brick} if $\End_A(X)$ is a division algebra. We denote by $\brick A$ the set of isomorphism classes of all bricks of $A$. For each $\cT\in\tors A$, the following equality holds \cite[Lemma 3.10]{DIRRT}:
\begin{equation}\label{brick generate tors}
\sT(\cT\cap\brick A)=\cT.
\end{equation}
The set $\tors kQ$ is a poset by inclusion. For $\cT\in\tors kQ$, 
if an interval $[\cT, \mod kQ]$ (respectively, $[0, \cT]$) in $\tors kQ$ is a finite set, $\cT$ is called \emph{upper finite} (respectively, \emph{lower finite}). Otherwise $\cT$ is called \emph{upper infinite} (respectively, \emph{lower infinite}).
For example, $\cI$ and $\cR\vee \cI$ are both upper infinite and lower infinite.

The following theorem gives various characterizations of functorially finite torsion classes of path algebras of extended Dynkin quivers.
\begin{thm}\label{thm-ch-ftors-ext-Dynkin}
Let $Q$ be an extended Dynkin quiver and $k$ a field.
For $\cT\in\tors kQ$, the following conditions are equivalent.
\begin{enumerate}[{\rm (i)}]
	\item $\cT\in\ftors kQ$.
	\item $\cT$ is either upper finite or lower finite.
	\item Either $\cT\cap\cP\neq 0$ or $\cT^{\perp}\cap\cI\neq 0$ holds.
	\item $\cT$ is not contained in an interval $[\cI, \cI\vee \cR]$ in $\tors kQ$.
\end{enumerate}
In particular, we have
\[
\tors kQ = \ftors kQ \sqcup [\cI, \cI \vee \cR].
\]
\end{thm}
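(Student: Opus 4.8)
The plan is to prove the chain of implications (i) $\Rightarrow$ (ii) $\Rightarrow$ (iii) $\Rightarrow$ (iv) $\Rightarrow$ (i), together with the final decomposition. The starting point is the structure theory of $\mod kQ$ for extended Dynkin $Q$: every module decomposes over $\cP\vee\cR\vee\cI$, with the Hom-vanishing $\Hom(\cR,\cP)=\Hom(\cI,\cR)=\Hom(\cI,\cP)=0$, and $\cR$ is an abelian subcategory that splits into tubes, at most three of which have rank $\ge 2$. I would also use the known bijection $M\mapsto\gen M$ between support $\tau$-tilting modules and $\ftors kQ$, together with basic facts from $\tau$-tilting theory (mutation, the Hasse quiver of $\ftors kQ$, the fact that $\mod kQ\in\ftors kQ$ corresponds to $kQ$ and $0$ corresponds to the zero module), and the brick–$\tau$-tilting correspondence so that Hasse arrows in $\tors kQ$ are labelled by bricks.

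For (i) $\Rightarrow$ (ii): a functorially finite torsion class $\cT=\gen M$ for a support $\tau$-tilting module $M$; the key input is that functorially finite torsion classes have only finitely many covers and cocovers in the Hasse quiver and, more strongly, that the intervals $[\cT,\mod kQ]$ and $[0,\cT]$ being infinite would force $\cT$ to lie "in the middle" of an infinite chain. Concretely, I would argue that if $\cT$ is functorially finite then one of $\cT$, $\cT^\perp$ must contain a preprojective or preinjective object, because otherwise $\cT$ sits between $\cI$ and $\cI\vee\cR$; in that range the torsion classes are parametrized by torsion pairs inside the abelian category $\cR$, hence by collections of submodules of the tubes, and both $[\cT,\mod kQ]$ (which involves adding preprojectives, an infinite lattice) and $[0,\cT]$ (removing from the regular part, also infinite because the homogeneous tubes give a specialization-closed-subset-type lattice over $\bP^1$) are infinite. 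Running this contrapositive is really (iv) $\Rightarrow$ (ii), so in practice I would prove (ii) $\Leftrightarrow$ (iv) and (iii) $\Leftrightarrow$ (iv) first, then close the loop with (iv) $\Rightarrow$ (i) and (i) $\Rightarrow$ (iv).

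For the equivalences with (iv): a torsion class $\cT$ lies in $[\cI,\cI\vee\cR]$ iff $\cI\subseteq\cT\subseteq\cI\vee\cR$, and since $\Hom(\cI,\cP)=0=\Hom(\cI,\cR)$, $\cI\subseteq\cT$ forces $\cT^\perp\subseteq{}^{\perp}\cI\cap(\text{no preinjectives})$; one checks $\cI\subseteq\cT$ iff no nonzero preinjective lies in $\cT^\perp$ is false in general, so the right statement is: $\cT\subseteq\cI\vee\cR$ iff $\cT\cap\cP=0$ (a torsion class containing a nonzero preprojective must contain an indecomposable projective summand's quotient... more carefully, $\cT\cap\cP\ne 0$ forces some preprojective in $\cT$, and closing under extensions/quotients with the structure of the preprojective component pushes $\cP$-objects of arbitrarily large dimension into $\cT$, eventually meeting $\cI$); and $\cI\subseteq\cT$ iff $\cT^\perp\cap\cI=0$ (dually). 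So (iv) fails iff $\cT\cap\cP=0$ and $\cT^\perp\cap\cI=0$, which is exactly the negation of (iii); this gives (iii) $\Leftrightarrow$ (iv). For (ii) $\Leftrightarrow$ (iv): if $\cT\in[\cI,\cI\vee\cR]$, then $[\cT,\mod kQ]\supseteq[\cT,\cI\vee\cR]$ hmm — I need both intervals infinite, so I would show $[\cI,\cI\vee\cR]$ itself is an infinite lattice (it contains all torsion classes of the form $\cI\vee\cS$ for $\cS$ a torsion class of the split abelian category $\cR$, and the homogeneous tubes alone contribute the infinite lattice of specialization-closed subsets of the points of $\bP^1_k$), and that any $\cT$ in this interval has infinitely many things both above and below within the interval, hence is upper and lower infinite; conversely if $\cT\notin[\cI,\cI\vee\cR]$ then (iii) holds, and if say $\cT\cap\cP\ne 0$ I would show $[0,\cT]$ is finite by a dévissage: $\cT$ is then generated by finitely many preprojectives plus part of $\cR\vee\cI$ with only finitely many regular and preinjective "pieces" available below it — more precisely use that $\cT\cap\cP\ne0$ forces $\cT$ to be lower finite because the torsion classes contained in it correspond to support $\tau$-tilting modules which are constrained, invoking the classification of $\ftors kQ$ via clusters in the associated cluster category, which is "almost finite". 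Finally (iv) $\Rightarrow$ (i): if $\cT\cap\cP\ne 0$ I construct a functorially finite torsion class equal to $\cT$ by exhibiting an $\Ext$-projective generator — pick a maximal preprojective summand and note the preprojective part is functorially finite, then $\cT$ is obtained by adding a functorially finite piece of $\cR\vee\cI$; dually if $\cT^\perp\cap\cI\ne 0$ work with $\torf$ and ${}^\perp\cT$. The final statement $\tors kQ=\ftors kQ\sqcup[\cI,\cI\vee\cR]$ is then immediate from (i) $\Leftrightarrow$ (iv) once one checks no element of $[\cI,\cI\vee\cR]$ is functorially finite, which follows because such a $\cT$ has $\cT\cap\cP=0$ and $\cT^\perp\cap\cI=0$, contradicting (iii).

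**Main obstacle.** The genuinely delicate part is (iv) $\Rightarrow$ (i), i.e.\ producing functorial finiteness from the mere existence of a nonzero preprojective in $\cT$ (or preinjective in $\cT^\perp$): one must show that adding the preprojective generators does not spoil the finiteness of the approximations coming from the regular–preinjective part, and this requires controlling the Hom-spaces from arbitrary modules into $\cT$ using the three Hom-vanishing relations plus the fact that $\Hom(\cP,-)$ restricted to a fixed slice is finite. I expect to handle this by reducing, via the Hom-orthogonality $\mod kQ=\cP\vee\cR\vee\cI$ with its triangular Hom-structure, to checking functorial finiteness separately on a preprojective part (where it is classical that cofinite-type torsion classes containing preprojectives are functorially finite, cf.\ the tilting theory of tame hereditary algebras) and on a regular-plus-preinjective part that is already "eventually functorially finite", and then splicing. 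The lattice-theoretic claim that $[\cI,\cI\vee\cR]$ is infinite and consists of upper- and lower-infinite torsion classes is routine once one identifies it with $\tors\cR$ shifted by $\cI$, using that $\cR$'s homogeneous tubes reproduce the Gabriel lattice over $\bP^1_k$.
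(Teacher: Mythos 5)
Your structural plan---use the decomposition $\mod kQ=\cP\vee\cR\vee\cI$ with its Hom-orthogonality to prove (iii)~$\Leftrightarrow$~(iv), and then tie these to upper/lower finiteness---is essentially the same scaffolding the paper uses. The (iii)~$\Leftrightarrow$~(iv) step is correct (after untangling the garbled sentence): $\cT\subseteq\cR\vee\cI$ iff $\cT\cap\cP=0$ (closure of a torsion class under direct summands), and $\cI\subseteq\cT$ iff $\cT^\perp\cap\cI=0$ (apply the torsion decomposition to a preinjective and use that $\cI$ is closed under quotients). However, two of the steps that actually carry the proof are wrong or missing.

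First, a concrete error: you claim $\cT\cap\cP\neq 0$ forces $[0,\cT]$ finite, i.e.\ lower finiteness. The correct implication is the opposite one, $\cT\cap\cP\neq 0\Rightarrow\cT$ upper finite, and $\cT=\mod kQ$ is an immediate counterexample to your version (it meets $\cP$, is trivially upper finite, but $[0,\cT]=\tors kQ$ is infinite). The right mechanism is: for $M\in\ind(\cT\cap\cP)$ the anti-isomorphism $(-)^\perp\colon[\gen M,\mod kQ]\to[0,M^\perp]\subset\torf kQ$ reduces the claim to finiteness of $\ind(M^\perp)$, which one obtains by $\tau$-shifting $M$ to a projective and using that all but finitely many indecomposables in $\cP\vee\cI$, all homogeneous regulars, and all regulars of large enough length in a tube of rank $\ge2$ are sincere. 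Your dévissage justification ("torsion classes contained in it correspond to constrained support $\tau$-tilting modules") gives no bound on $[0,\cT]$ and does not distinguish it from $[\cT,\mod kQ]$; the direction mix-up propagates through your (ii)~$\Leftrightarrow$~(iv) argument.

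Second, the step you flag as "the genuinely delicate part," (iv)~$\Rightarrow$~(i), is indeed where your sketch fails. The splicing idea---take a maximal preprojective summand and add a functorially finite piece of $\cR\vee\cI$---has no proof behind it: a join of functorially finite pieces is not automatically functorially finite, and the "cofinite-type torsion classes containing preprojectives are functorially finite" principle you cite is not a theorem you can invoke. The paper avoids this entirely by proving (ii)~$\Rightarrow$~(i) via a general $\tau$-tilting fact, [DIJ, Theorem 3.1]: if $[\cT,\mod\Lambda]$ or $[0,\cT]$ is finite then $\cT$ is functorially finite. This is the missing tool. For (i)~$\Rightarrow$~(ii), your "in the middle of an infinite chain" heuristic is also not an argument; the paper's is concrete: if $\cT=\gen M$ for a non-sincere support $\tau$-tilting $M$, reduce to a Dynkin subquiver; if $M$ is sincere, then over the hereditary algebra $kQ$ it is a tilting module, and [SS, Lemma XVII.3.4] gives $|M^{\rm rg}|\le\sharp Q_0-2$, forcing $M^{\rm pp}\neq 0$ or $M^{\rm pi}\neq 0$, hence (iii).
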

\begin{rem}
We have an explicit description of the interval $[\cI,\cI\vee\cR]$. In fact, $[\cI, \cI\vee\cR]$ is a \emph{wide interval} in the sense that $\cI^{\perp} \cap (\cI \vee \cR) =\cR$ is a wide subcategory of $\mod kQ$. Thus by \cite[Theorem 4.2]{AP} and \eqref{R=VRx}, we have isomorphisms of posets
\[[\cI, \cI\vee\cR] \simeq \tors\cR\simeq\prod_{x\in|\bP_k^1|}\tors \cR_x\simeq\sP(|\bP_k^1|_1)\times\prod_{x\in|\bP_k^1|_{\ge2}}\tors \cR_x.\]
A description of $\tors\cR_x$ for $x\in|\bP_k^1|_{\ge2}$ can be found in \cite[Theorem 6.4]{Kimura}.
\end{rem}

The proof of Theorem \ref{thm-ch-ftors-ext-Dynkin} is given after Proposition \ref{prop-tors-divid}.

We often use a fact that a $kQ$-module $X$ satisfying $X\simeq \tau X$ is sincere, since this isomorphism implies that the dimension vector of $X$ is an imaginary root of $Q$ \cite[Lemmas VII.4.2, VII.4.11]{ASS}.
\begin{lem}\label{lem-P-int-fin}
The following statements hold.
\begin{enumerate}[{\rm (a)}]
	\item If $M\in\ind\cP$, then $\ind(M^{\perp})$ is a finite set.
	\item If $M\in\ind\cI$, then $\ind({}^{\perp}M)$ is a finite set.
\end{enumerate}
\end{lem}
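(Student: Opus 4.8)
The plan is first to reduce part (b) to part (a). The standard duality $D\colon\mod kQ\isoto\mod kQ^{\op}$ exchanges $\cP$ and $\cI$, the opposite quiver $Q^{\op}$ is again an extended Dynkin quiver, and $D$ restricts to a bijection $\ind({}^{\perp}M)\isoto\ind((DM)^{\perp})$, the latter computed over $kQ^{\op}$. So it suffices to prove (a): for $M\in\ind\cP$, the set $M^{\perp}\cap\ind(\mod kQ)$ is finite.

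The main tool is the identity $\dim_k\Hom(M,X)-\dim_k\Ext^1(M,X)=\langle\underline{\dim}M,\underline{\dim}X\rangle$ (valid since $kQ$ is hereditary), combined with the Auslander--Reiten formula $\Ext^1(M,X)\cong D\overline{\Hom}(X,\tau M)$, where $\overline{\Hom}$ is $\Hom$ modulo morphisms factoring through injectives. I would first get rid of the $\Ext^1$-term. If $M$ is projective then $\tau M=0$, and otherwise $\tau M\in\ind\cP$; in either case $\Ext^1(M,X)=0$ as soon as $\Hom(X,\tau M)=0$, which holds (i) for every $X\in\cR\vee\cI$, because $\Hom(\cR\vee\cI,\cP)=0$, and (ii) for every $X\in\cP$ outside the set of predecessors of $\tau M$ --- and a preprojective component of a hereditary algebra has the property that each of its objects has only finitely many predecessors. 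Hence there is a finite set $\cF_0\subseteq\ind(\mod kQ)$ with $\dim_k\Hom(M,X)=\langle\underline{\dim}M,\underline{\dim}X\rangle$ for all $X\in\ind(\mod kQ)\setminus\cF_0$.

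The second step is to show $\langle\underline{\dim}M,\underline{\dim}X\rangle>0$ for all but finitely many $X\in\ind(\mod kQ)$. Let $\partial$ denote the defect; since $M$ is preprojective $\partial(M)<0$, and since $\delta$ spans the radical of the symmetrized Euler form, $\langle\underline{\dim}M,\delta\rangle=-\partial(M)>0$. Then:
\begin{itemize}
\item if $X\in\cH$ has regular length $\ell$, then $\underline{\dim}X=\ell\delta$ (the regular simple of a homogeneous tube has dimension vector $\delta$), so $\langle\underline{\dim}M,\underline{\dim}X\rangle=-\ell\,\partial(M)>0$; this disposes of all of $\cH$ at once, which is exactly what one needs since $\cH$ has infinitely many indecomposables;
\item if $X$ lies in one of the at most three tubes of rank $r\geq2$ making up $\cR_{\geq2}$, write the regular length of $X$ as $qr+t$ with $0\leq t<r$; then $\underline{\dim}X=q\delta+\underline{\dim}X'$ with $X'$ of regular length $t$, so $\underline{\dim}X'$ lies in a finite set and $\langle\underline{\dim}M,\underline{\dim}X\rangle=-q\,\partial(M)+\langle\underline{\dim}M,\underline{\dim}X'\rangle\to+\infty$, leaving only finitely many exceptions;
\item if $X\in\cP\vee\cI$, the dimension vectors of the indecomposable preprojectives and preinjectives grow linearly (increasing by $\delta$ after a fixed number of applications of $\tau^{\pm1}$), so again $\langle\underline{\dim}M,\underline{\dim}X\rangle\to+\infty$, leaving finitely many exceptions.
\end{itemize}
Collecting these exceptions gives a finite set $\cF_1$. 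For $X\in\ind(\mod kQ)\setminus(\cF_0\cup\cF_1)$ we then have $\dim_k\Hom(M,X)=\langle\underline{\dim}M,\underline{\dim}X\rangle>0$, so $X\notin M^{\perp}$; thus $\ind(M^{\perp})\subseteq\cF_0\cup\cF_1$ is finite, and (b) follows by duality.

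The part that takes the most care is the preprojective contribution: the $\Ext^1$-vanishing in (ii) rests on the "finitely many predecessors" property of preprojective components, and the asymptotic positivity rests on the linear growth of the dimension vectors of preprojective and preinjective indecomposables together with $-\partial(M)>0$. By contrast the homogeneous tubes, although infinite in number, are the easiest case, since there the dimension vector is forced to be an exact multiple of $\delta$.
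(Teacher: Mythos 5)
Your proposal is correct, but it takes a genuinely different route from the paper. The paper writes $M\simeq\tau^{-\ell}(kQe_i)$ and uses the isomorphism $\Hom(M,X)\simeq\Hom(kQe_i,\tau^{\ell}X)$ to translate $X\in M^{\perp}$ into non-sincerity of $\tau^{\ell}X$; it then observes that non-sincere indecomposables are scarce: only finitely many in $\cP$ and $\cI$, none in $\cH$ (there $\tau X\simeq X$ forces sincerity), and only those of regular length $<r$ in each of the finitely many tubes of rank $r\geq2$. You instead argue through the Euler form and the defect, showing $\langle\Dim M,\Dim X\rangle>0$ for all but finitely many indecomposables $X$, using $\langle\Dim M,\delta\rangle=-\partial(M)>0$ and the linear growth of dimension vectors in $\cP$, $\cI$, and the tubes. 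Both proofs are sound; yours makes the role of the defect explicit, whereas the paper's is slightly more elementary. One genuine simplification to your own argument: step 1 (forcing $\Ext^1(M,X)=0$ via the Auslander--Reiten formula and the finite-predecessor property) is unnecessary, because $\dim_k\Hom(M,X)=\langle\Dim M,\Dim X\rangle+\dim_k\Ext^1(M,X)\geq\langle\Dim M,\Dim X\rangle$ always holds over a hereditary algebra, so positivity of the Euler pairing already gives $\Hom(M,X)\neq0$ without any $\Ext$ vanishing, and the set $\cF_0$ can be dropped entirely.
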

\begin{proof}
Since (b) is a dual of (a), we only prove (a).
Take $i \in Q_0$ and $\ell \in\bZ_{\geq 0}$ such that $M \simeq \tau^{-\ell}(kQ e_i)$.
If $\ell=0$, then $M^{\perp}\simeq \mod kQ/\langle e_i \rangle$. Since $kQ/\langle e_i \rangle$ is representation finite, the assertion follows.

Assume $\ell >0$.
For $X\in\ind(M^{\perp})$, we have $0=\Hom_{kQ}(M, X) \simeq \Hom_{kQ}(kQ e_i, \tau^{\ell}X)$. Thus $\tau^{\ell}X$ is not sincere.
Since all but finitely many modules in $\ind\cP$ (respectively, $\ind\cI$) are sincere \cite[Proposition IX.5.6]{ASS}, $\ind(\cP \cap M^{\perp})$ (respectively, $\ind(\cI \cap M^{\perp})$) is finite.
Assume that $X$ belongs to a tube of rank $r\geq 1$.
If $r=1$, then $\tau X \simeq X$ holds.
Thus $\tau^{\ell}X\simeq X$ is sincere, a contradiction.
Assume $r\geq 2$.
If the regular length of $X$ is greater than or equal to $r$, then $X$ is sincere.
Since the number of tubes of rank $r\geq 2$ is finite, $\ind(\cR \cap M^{\perp})$ is finite.
Therefore $\ind(M^{\perp})$ is a finite set.
\end{proof}
\begin{prop}\label{prop-up-down-finite}
For a torsion class $\cT\in\tors kQ$, the following statements hold.
\begin{enumerate}[{\rm (a)}]
	\item $\cT$ is upper finite if and only if $\cT\cap\cP\neq 0$.
	\item $\cT$ is lower finite if and only if $\cT^{\perp}\cap\cI\neq 0$.
\end{enumerate}
\end{prop}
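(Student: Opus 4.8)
The plan is to deduce (b) from (a) by the usual $k$-duality, and then to prove the two directions of (a) by quite different means: the ``if'' direction will fall straight out of Lemma~\ref{lem-P-int-fin}(a), whereas for ``only if'' I would argue the contrapositive, producing an explicit infinite family of torsion classes above $\cT$ whenever $\cT\cap\cP=0$. For the reduction of (b) to (a): the duality $\Hom_k(-,k)\colon\mod kQ\xrightarrow{\ \sim\ }\mod kQ^{\op}$ carries $\cP$ to $\cI^{kQ^{\op}}$ and $\cI$ to $\cP^{kQ^{\op}}$, sends torsion classes to torsion-free classes, and turns an interval $[0,\cT]$ of $\tors kQ$ into an interval $[\cT',\mod kQ^{\op}]$ of $\tors kQ^{\op}$ (where $\cT'$ is the torsion class of $\mod kQ^{\op}$ whose torsion-free class is $\Hom_k(\cT^{\perp},k)$), matching ``$\cT$ is lower finite'' with ``$\cT'$ is upper finite'' and ``$\cT^{\perp}\cap\cI\neq0$'' with ``$\cT'\cap\cP^{kQ^{\op}}\neq0$''. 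Since $kQ^{\op}$ is again the path algebra of an extended Dynkin quiver, (b) for $Q$ is exactly (a) for $Q^{\op}$, so it suffices to prove (a) for all extended Dynkin $Q$.

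For (a), ``if'': assume $\cT\cap\cP\neq0$ and pick $0\neq M\in\cT\cap\cP$ with an indecomposable summand $M'\simeq\tau^{-\ell}(kQe_i)$. Every $\cU\in\tors kQ$ with $\cU\supseteq\cT$ contains $M'$, hence $\cU^{\perp}\subseteq(M')^{\perp}$; by Lemma~\ref{lem-P-int-fin}(a) the set $\ind((M')^{\perp})$ is finite, so there are only finitely many additive (in particular torsion-free) subcategories inside $(M')^{\perp}$. As $\cU\mapsto\cU^{\perp}$ is injective on $\tors kQ$ (each $(\cU,\cU^{\perp})$ being a torsion pair, $kQ$ being finite-dimensional), the interval $[\cT,\mod kQ]$ is finite, i.e.\ $\cT$ is upper finite.

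For (a), ``only if'' (contrapositive): assume $\cT\cap\cP=0$. Decomposing each $X\in\cT$ along the standard classes as $X=X_{\cP}\oplus X_{\cR}\oplus X_{\cI}$, the summand $X_{\cP}$ is a quotient of $X$, hence lies in $\cT\cap\cP=0$, so $\cT\subseteq\cR\vee\cI$. For every $n\geq0$, $T_n:=\tau^{-n}(kQ)$ is a basic (preprojective) tilting module, so $\cT_n:=\gen T_n=\{Y\mid\Ext^1_{kQ}(T_n,Y)=0\}$ is a torsion class; and for $Y\in\cR\vee\cI$ one has $\Ext^1_{kQ}(T_n,Y)=0$, since by Auslander--Reiten duality it is the $k$-dual of $\overline{\Hom}_{kQ}(Y,\tau T_n)$, which vanishes because $\tau T_n\in\cP$ and $\Hom_{kQ}(\cR\vee\cI,\cP)=0$ (for $n=0$ this is simply $\gen(kQ)=\mod kQ$). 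Thus $\cT\subseteq\cR\vee\cI\subseteq\cT_n$ for all $n$; as the $T_n$ are pairwise non-isomorphic basic tilting modules, the $\cT_n$ are pairwise distinct torsion classes, so $[\cT,\mod kQ]$ is infinite and $\cT$ is upper infinite.

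The ``if'' direction is routine once Lemma~\ref{lem-P-int-fin} is available; the real work is in ``only if'', where I must exhibit a genuinely infinite family of torsion classes lying above $\cT$ (equivalently, of torsion-free classes inside $\cP=(\cR\vee\cI)^{\perp}$) and keep them apart. Routing this through the preprojective tilting modules $\tau^{-n}(kQ)$ is clean, but rests on the standard facts that over a representation-infinite hereditary algebra $\tau^{-n}(kQ)$ is again a basic tilting module and that $T\mapsto\gen T$ is injective on basic tilting modules; the inclusions $\cR\vee\cI\subseteq\gen(\tau^{-n}(kQ))$ then reduce, via Auslander--Reiten duality, to the $\Hom$-vanishing between $\cP$, $\cR$, $\cI$ recalled in the preliminaries. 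A small point still to check is that $\cT\cap\cP=0$ genuinely forces $\cT\subseteq\cR\vee\cI$, which uses the decomposition of modules along $\cP,\cR,\cI$ together with closure of $\cT$ under direct summands.
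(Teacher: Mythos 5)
Your argument is correct and follows the same high-level route as the paper: dualize to reduce (b) to (a), deduce the ``if'' direction from Lemma~\ref{lem-P-int-fin}(a) via the injectivity of $\cU\mapsto\cU^{\perp}$, and for ``only if'' observe that $\cT\cap\cP=0$ forces $\cT\subseteq\cR\vee\cI$ (using closure of torsion classes under direct summands and the $\Hom$-vanishing $\Hom_{kQ}(\cR\vee\cI,\cP)=0$), so that $[\cT,\mod kQ]\supseteq[\cR\vee\cI,\mod kQ]$. The one genuine difference is at the very end: the paper simply asserts $\sharp[\cR\vee\cI,\mod kQ]=\infty$ as a known fact, whereas you substantiate it with the explicit infinite chain $\cT_n=\gen(\tau^{-n}(kQ))$ of torsion classes all containing $\cR\vee\cI$, distinguished via the injectivity of $T\mapsto\gen T$ on basic tilting modules and separated from $\cR\vee\cI$ by Auslander--Reiten duality. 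This is a slightly more self-contained treatment of the last step; the underlying idea and the use of Lemma~\ref{lem-P-int-fin} are the same as in the paper.
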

\begin{proof}
Since (b) is a dual of (a), we only prove (a).
To prove the ``if'' part, let $M\in\ind(\cT\cap\cP)$.
The map $(-)^{\perp}$ induces a bijection from $[\gen M, \mod kQ]$ to $[0, M^{\perp}]\subseteq\torf kQ$.
Thus $[\gen M, \mod kQ]$ is a finite set by Lemma \ref{lem-P-int-fin}(a).
Therefore $\cT$ is upper finite.
To prove the ``only if'' part, assume $\cT \cap \cP=0$.
Then $\cT \subseteq \cR \vee \cI$ holds, and we have $\sharp[\cT, \mod kQ] > \sharp[\cR\vee \cI, \mod kQ] = \infty$.
\end{proof}
For a $kQ$-module $M$, we denote by $M=M^{\rm pp}\oplus M^{\rm rg}\oplus M^{\rm pi}$ a decomposition with respect to $\cP\vee \cR\vee \cI$, that is, $M^{\rm pp}\in\cP$, $M^{\rm rg}\in\cR$ and $M^{\rm pi}\in\cI$.
We denote by $|M|$ the number of isomorphism classes of indecomposable direct summands of $M$.

We refer the following observation.
\begin{prop}\label{prop-ftors-fin-int}
For $\cT\in\tors kQ$, the following statements are equivalent.
\begin{enumerate}[{\rm (i)}]
	\item $\cT\in\ftors kQ$.
	\item $\cT$ is either upper finite or lower finite.
\end{enumerate}
\end{prop}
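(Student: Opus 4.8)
The plan is to combine Proposition \ref{prop-up-down-finite} with the mutation theory of support $\tau$-tilting modules \cite{Adachi-Iyama-Reiten} and a dimension estimate for homomorphisms into preinjective modules. By Proposition \ref{prop-up-down-finite}, condition (ii) is equivalent to ``$\cT\cap\cP\neq 0$ or $\cT^{\perp}\cap\cI\neq 0$''. This is in turn equivalent to ``$\cT\not\subseteq\cR\vee\cI$ or $\cI\not\subseteq\cT$'': indeed $\cT\cap\cP=0$ forces $\cT\subseteq\cR\vee\cI$ (a preprojective summand of a module in $\cT$ is a quotient of it, hence lies in $\cT$), and $\cT^{\perp}\cap\cI=0$ forces $\cT^{\perp}\subseteq\cP\vee\cR$, so $\cT={}^{\perp}(\cT^{\perp})\supseteq{}^{\perp}(\cP\vee\cR)\supseteq\cI$ by $\Hom_{kQ}(\cI,\cP)=\Hom_{kQ}(\cI,\cR)=0$; the reverse implications are clear. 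So it suffices to show that $\cT\in\ftors kQ$ if and only if $\cT\not\subseteq\cR\vee\cI$ or $\cI\not\subseteq\cT$.

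For (ii)$\Rightarrow$(i) I would first reduce by duality (applying $\kD\colon\mod kQ\to\mod kQ^{\op}$, which interchanges torsion and torsion-free classes, preserves functorial finiteness, and carries $Q$ to another extended Dynkin quiver) to the case $\cT\cap\cP\neq 0$, and pick $M\in\ind(\cT\cap\cP)$. Since $M$ is an indecomposable preprojective module one checks that $\Ext^1_{kQ}(M,\gen M)=0$ (using $\Hom_{kQ}(M,\tau M)=0$, a consequence of the Euler form, together with $\Hom_{kQ}(\cR\vee\cI,\cP)=0$), so $\gen M$ is a functorially finite torsion class with $M$ as Ext-projective object. By Proposition \ref{prop-up-down-finite}(a) (via Lemma \ref{lem-P-int-fin}) the interval $[\gen M,\mod kQ]$ is finite, and $\cT$ belongs to it since $\gen M\subseteq\cT$. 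In a finite interval every element is joined to the bottom by a saturated chain, and by the support $\tau$-tilting mutation theorem \cite{Adachi-Iyama-Reiten} a torsion class covering a functorially finite one is again functorially finite; ascending such a chain from $\gen M$ gives $\cT\in\ftors kQ$.

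For (i)$\Rightarrow$(ii) I would argue by contradiction. Suppose $\cT\in\ftors kQ$ but $\cT\cap\cP=0$ and $\cT^{\perp}\cap\cI=0$; by the first paragraph, $\cI\subseteq\cT\subseteq\cR\vee\cI$. By \cite{Adachi-Iyama-Reiten} we may write $\cT=\gen N$ for a support $\tau$-tilting $kQ$-module $N$, and then $N\in\cT\subseteq\cR\vee\cI$, say $N=N^{\rm rg}\oplus N^{\rm pi}$. The crucial point is that $\dim_k\Hom_{kQ}(N,X)$ is bounded independently of the indecomposable preinjective module $X$. Indeed, writing $X=\tau^{\ell}I_j$, we have $\Hom_{kQ}(N^{\rm pi},X)=0$ for $\ell$ large, since the preinjective component of $kQ$ is directed and $N^{\rm pi}$ involves only finitely many indecomposables, while $\dim_k\Hom_{kQ}(N^{\rm rg},X)$ stays bounded because there are no extensions from a regular module to a preinjective one, so this dimension equals the Euler pairing of the dimension vectors, and the dimension vector of $\tau^{\ell}I_j$ grows proportionally to $\ell\delta$ whereas the Euler pairing of a regular dimension vector with $\delta$ vanishes. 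Hence, with $t_N(X)$ the trace of $N$ in $X$, we get $\dim_k t_N(X)\le(\dim_k N)(\dim_k\Hom_{kQ}(N,X))\le C\dim_k N$ for a fixed constant $C$, while $\dim_k X\to\infty$ as $X$ runs over the indecomposable preinjective modules. Therefore $t_N(X)\subsetneq X$, i.e.\ $X\notin\gen N$, for all but finitely many $X$, contradicting $\cI\subseteq\cT=\gen N$.

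The verifications I have suppressed are the two appeals to duality and the identity $\gen N=\{X\mid t_N(X)=X\}$. The step requiring the most care is the climb in (ii)$\Rightarrow$(i)---that a torsion class lying in a finite interval below $\mod kQ$ is functorially finite---which depends on the mutation theorem of \cite{Adachi-Iyama-Reiten}; the genuinely new ingredient is the uniform bound on $\dim_k\Hom_{kQ}(N,X)$ over indecomposable preinjective $X$, which rests on the vanishing of the Euler pairing of a regular dimension vector with $\delta$.
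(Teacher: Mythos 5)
Your proof is correct, and your argument for (i)$\Rightarrow$(ii) is genuinely different from the paper's. The paper writes $\cT=\gen M$ for a support $\tau$-tilting module $M$, treats the non-sincere case by passing to a Dynkin subquiver, and for sincere $M$ (hence a tilting module with $|M|=\sharp Q_0$) invokes the structural bound $|M^{\rm rg}|\le\sharp Q_0-2$ from Simson--Skowro\'nski to conclude that $M^{\rm pp}\ne 0$ or $M^{\rm pi}\ne 0$, then reads off upper or lower finiteness directly (the latter via $0\ne\tau(M^{\rm pi})\in\sub(\tau M)=\cT^\perp$). You instead argue by contradiction from $\cI\subseteq\cT=\gen N\subseteq\cR\vee\cI$: using $\Ext^1_{kQ}(\cR,\cI)=0$ and the vanishing of the Euler pairing of a regular dimension vector with the null root $\delta$, you bound $\dim_k\Hom_{kQ}(N,X)$ uniformly over indecomposable preinjective $X$, hence bound $\dim_k t_N(X)$, which forces some $X\in\cI$ to escape $\gen N$. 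This is a self-contained growth estimate that sidesteps the Simson--Skowro\'nski input at the cost of a little bookkeeping with the Coxeter transformation. For (ii)$\Rightarrow$(i) your climb through a saturated chain of covers is essentially the argument the paper compresses into the citation of \cite[Theorem 3.1]{DIJ}; note, though, that the key fact you invoke---that a torsion class covering a functorially finite one in $\tors kQ$ is again functorially finite---is not in \cite{Adachi-Iyama-Reiten} (which only describes mutation inside $\ftors$) but comes from \cite{DIJ} and \cite{DIRRT}, so that attribution should be corrected.
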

\begin{proof}
(i) implies (ii):
It is enough to show that either $\ind(\cT^{\perp})$ or $\ind\cT$ is a finite set.
Let $\cT=\Fac M$ for a support $\tau$-tilting $kQ$-module $M$.
If $M$ is not sincere, then there exists a proper full subquiver $Q'$ of $Q$ such that $M\in\mod kQ'$.
Since $Q'$ is Dynkin and $\cT\subseteq\mod kQ'$, $\ind\cT$ is a finite set.

Assume that $M$ is sincere, so $M$ is a $\tau$-tilting module.
Since $kQ$ is hereditary, $M$ is a tilting module and hence $|M|=\sharp Q_0$.
Take a decomposition $M=M^{\rm pp}\oplus M^{\rm rg}\oplus M^{\rm pi}$.
We have $|M^{\rm rg}| \leq \sharp Q_0 -2$ by \cite[Lemma XVII.3.4]{Simson-Skowronski}, thus at least one of $M^{\rm pp}$ and $M^{\rm pi}$ is non-zero.
If $M^{\rm pp}\neq 0$, then $\cT=\gen M$ is upper finite by Proposition \ref{prop-up-down-finite}(a).
If $M^{\rm pi}\neq 0$, then $0\neq \tau(M^{\rm pi})\in\sub(\tau M)=\cT^{\perp}$ and hence $\cT^{\perp}\cap\cI\neq 0$.
Thus $\cT$ is lower finite by Proposition \ref{prop-up-down-finite}(b).

(ii) implies (i):
This follows from \cite[Theorem 3.1]{DIJ}.
\end{proof}
We complete the proof of Theorem \ref{thm-ch-ftors-ext-Dynkin} by the following proposition.
\begin{prop}\label{prop-tors-divid}
We have $\tors kQ = \ftors kQ \sqcup [\cI, \cI \vee \cR]$.
\end{prop}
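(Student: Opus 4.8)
The strategy is to show that $\tors kQ$ splits as a disjoint union of $\ftors kQ$ and the interval $[\cI, \cI\vee\cR]$, using the characterizations already proved in Propositions \ref{prop-up-down-finite} and \ref{prop-ftors-fin-int}. By Proposition \ref{prop-ftors-fin-int}, a torsion class lies in $\ftors kQ$ if and only if it is upper finite or lower finite, and by Proposition \ref{prop-up-down-finite} this happens exactly when $\cT\cap\cP\neq 0$ or $\cT^\perp\cap\cI\neq 0$. So it suffices to prove that, for $\cT\in\tors kQ$, the failure of both conditions --- that is, $\cT\cap\cP = 0$ and $\cT^\perp\cap\cI = 0$ --- is equivalent to $\cT\in[\cI,\cI\vee\cR]$; disjointness is then automatic since $\cI$ and $\cI\vee\cR$ are themselves upper and lower infinite (hence not in $\ftors kQ$), and every element of the interval lies between them.

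First I would handle the inclusion $[\cI,\cI\vee\cR]\subseteq\tors kQ\setminus\ftors kQ$. If $\cI\subseteq\cT\subseteq\cI\vee\cR$, then $\cT\cap\cP=0$ because $\Hom_{kQ}(\cI\vee\cR,\cP)=0$ forces every module in $\cP\cap(\cI\vee\cR)$ to be zero; and dually $\cT^\perp\cap\cI=0$ since $\cI\subseteq\cT$ already kills all of $\cI$. By Propositions \ref{prop-up-down-finite} and \ref{prop-ftors-fin-int}, $\cT\notin\ftors kQ$. For the reverse inclusion, suppose $\cT\in\tors kQ$ satisfies $\cT\cap\cP=0$ and $\cT^\perp\cap\cI=0$. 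From $\cT\cap\cP=0$ we get $\cT\subseteq\cR\vee\cI$ (this is where one uses that any indecomposable summand of a module in $\cT$ generates a submodule in $\cT$; since $\cT\cap\cP=0$ and an indecomposable preprojective module cannot be a factor of a module in $\cR\vee\cI$, a short argument on the torsion-class decomposition gives $\cT\subseteq\cR\vee\cI$). It remains to show $\cI\subseteq\cT$. Here I would argue by contradiction: if some indecomposable preinjective $N\notin\cT$, then $N$ has a nonzero image in $\cT^\perp$ --- more precisely, using that $\cT$ is a torsion class there is a torsion-free quotient, and one shows the torsion-free class $\cT^\perp$ must then contain an indecomposable preinjective module, contradicting $\cT^\perp\cap\cI=0$. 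The cleanest route is: $\cT^\perp$ is a torsion-free class; if $\cT^\perp\cap\cI=0$ then $\cT^\perp\subseteq\cP\vee\cR$; but $\Hom_{kQ}(\cI,\cP)=0$ and $\Hom_{kQ}(\cI,\cR)=0$ force $\cI\subseteq{}^\perp(\cT^\perp)=\cT$ (using that $\cT$ is closed under extensions and factor modules and ${}^\perp(\cT^\perp)=\cT$ for torsion classes in $\mod kQ$). Combining, $\cI\subseteq\cT\subseteq\cR\vee\cI$, i.e.\ $\cT\in[\cI,\cI\vee\cR]$.

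Disjointness of $\ftors kQ$ and $[\cI,\cI\vee\cR]$ follows immediately from the above: an element of the interval fails both conditions of Proposition \ref{prop-up-down-finite}, hence is neither upper nor lower finite, hence by Proposition \ref{prop-ftors-fin-int} is not functorially finite; conversely we have just shown anything outside the interval satisfies one of the two conditions. Thus every $\cT\in\tors kQ$ lies in exactly one of the two sets, giving $\tors kQ=\ftors kQ\sqcup[\cI,\cI\vee\cR]$.

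**Main obstacle.** The only genuinely delicate point is the passage from ``$\cT\cap\cP=0$ and $\cT^\perp\cap\cI=0$'' to the two-sided containment $\cI\subseteq\cT\subseteq\cR\vee\cI$. The upper bound $\cT\subseteq\cR\vee\cI$ requires knowing that a torsion class meeting $\cP$ trivially cannot contain any module with a nonzero preprojective direct summand or, more subtly, any module whose preprojective "part" is nonzero in the appropriate sense; one leans on the fact that indecomposable preprojectives are not quotients of modules in $\cR\vee\cI$ together with closure of $\cT$ under factors. The lower bound $\cI\subseteq\cT$ dually uses closure of the torsion-free class $\cT^\perp$ under submodules and the orthogonality relations $\Hom_{kQ}(\cI,\cR)=\Hom_{kQ}(\cI,\cP)=0$. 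Both are standard once the preprojective/regular/preinjective trichotomy and its $\Hom$-vanishing are invoked, so I expect this to be short, but it is the step that actually uses the structure of $\mod kQ$ rather than just the abstract poset combinatorics.
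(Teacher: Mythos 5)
Your proposal is correct and follows essentially the same route as the paper: reduce everything to Propositions \ref{prop-up-down-finite} and \ref{prop-ftors-fin-int}, then use the $\cP\vee\cR\vee\cI$ trichotomy and the $\Hom$-vanishings $\Hom_{kQ}(\cR\vee\cI,\cP)=0=\Hom_{kQ}(\cI,\cP\vee\cR)$ to convert ``$\cT\cap\cP=0$ and $\cT^\perp\cap\cI=0$'' into the sandwich $\cI\subseteq\cT\subseteq\cR\vee\cI$. You simply make explicit a couple of small steps the paper leaves implicit (e.g.\ that $\cT^\perp\subseteq\cP\vee\cR$ forces $\cI\subseteq{}^\perp(\cT^\perp)=\cT$), and your disjointness argument is phrased via the two vanishing conditions rather than the upper/lower (in)finiteness of $\cI$ and $\cR\vee\cI$, but both are immediate from the same propositions.
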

\begin{proof}
Let $\cT\in\tors kQ\setminus\ftors kQ$.
By Proposition \ref{prop-ftors-fin-int}, $\cT$ is upper infinite and lower infinite. By Proposition \ref{prop-up-down-finite}, $\cT\cap\cP=0$ and $\cT^\perp\cap\cI=0$.
Thus $\cT\subseteq\cR \vee \cI$ and $\cT^{\perp} \subseteq \cP\vee \cR$. Thus $\cI \subset \cT\subset\cR \vee \cI$.

It remains to show $\ftors kQ \cap [\cI, \cI \vee \cR]=\emptyset$.
Fix $\cT\in \ftors kQ$. By Proposition \ref{prop-ftors-fin-int}, $\cT$ is either upper finite or lower finite. Since $\cI$ and $\cR\vee\cI$ are upper infinite and lower infinite, we have $\cT\notin[\cI, \cI \vee \cR]$, as desired.
\end{proof}

We are ready to prove Theorem \ref{thm-ch-ftors-ext-Dynkin}.

\begin{proof}[Proof of Theorem \ref{thm-ch-ftors-ext-Dynkin}]
(i)$\Leftrightarrow$(ii) follows from by Proposition \ref{prop-ftors-fin-int}. (ii)$\Leftrightarrow$(iii) follows from Proposition \ref{prop-up-down-finite}.
(i)$\Leftrightarrow$(iv) follows from Proposition \ref{prop-tors-divid}.
\end{proof}
We also prove the following result, which will play a key role in this paper.

\begin{thm}\label{upper not lower}
Let $Q$ be an extended Dynkin quiver and $k$ a field, and $\cT\in\tors kQ$.
\begin{enumerate}[\rm(a)]
\item $\cT$ is upper finite if and only if $\cT\cap\cP\neq 0$. Moreover, the following conditions are equivalent.
\begin{enumerate}[\rm(i)]
\item $\cT$ is upper finite and lower infinite.
\item $\ind(\cT\cap\cP)$ is an infinite set.
\item $\cT\cap\cP\neq0$ and $\cH\vee\cI\subset\cT$.
\end{enumerate}
\item $\cT$ is lower finite if and only if $\cT^\perp\cap\cI\neq 0$. Moreover, the following conditions are equivalent.
\begin{enumerate}[\rm(i)]
\item $\cT$ is lower finite and upper infinite.
\item $\ind(\cT^\perp\cap\cI)$ is an infinite set.
\item $\cT^\perp\cap\cI\neq0$ and $\cP\vee\cH\subset\cT^\perp$.
\end{enumerate}
\end{enumerate}
\end{thm}

We start with proving the following result.

\begin{lem}\label{lem-tors-pp-inf}
Assume that $\cT\in\ftors kQ$ is lower infinite. Then the following assertions hold.
\begin{enumerate}[\rm(a)]
\item There is an infinite strictly decreasing chain $\cT = \cT^0 \supset \cT^1 \supset \cT^2 \supset \cdots$ in $\ftors kQ$.
\item $\ind(\cT\cap \cP)$ is an infinite set.
\end{enumerate}
\end{lem}
\begin{proof}
(a) This is \cite[Theorem 3.1,\ Proof of Proposition 3.9(d)$\Rightarrow$(b)]{DIJ}.

(b) By (a), there is an infinite strictly decreasing chain $\cT = \cT^0 \supset \cT^1 \supset \cT^2 \supset \cdots$ in $\ftors kQ$.
Then $\cT^i$ is lower infinite for each $i$. Thus $\cT^i$ is upper finite by Theorem \ref{thm-ch-ftors-ext-Dynkin}(i) and (ii).
Thus $\cT^i \cap \cP \neq 0$ holds by Proposition \ref{prop-up-down-finite}. Thus
\[\ind(\cT\cap\cP)=\ind(\cT^0\cap\cP) \supset \ind(\cT^1\cap\cP) \supset\cdots\]
is a descending chain of non-empty sets.
If $\ind(\cT\cap \cP)$ is a finite set, then there exists $P\in\bigcap_{i\ge0}\ind(\cT^i\cap\cP)$.
However this is a contradiction since $\gen P$ is upper finite by Proposition \ref{prop-up-down-finite}(a).
Therefore $\ind(\cT\cap \cP)$ is an infinite set.
\end{proof}

We say that an indecomposable preprojective (respectively, preinjective) $kQ$-module $X$ is \emph{$\tau^-$-sincere} (respectively, \emph{$\tau$-sincere}) if for any integer $i\geq 0$, $\tau^{-i}X$ (respectively, $\tau^iX$) is sincere.
\begin{lem}\label{lem-pp-hom-non-zero}
Let $P\in\ind\cP$, $I\in\ind\cI$, and $X\in\mod kQ$ such that $\tau X \simeq X$.
\begin{enumerate}[{\rm (a)}]
	\item If $P$ is $\tau^-$-sincere (respectively, $I$ is $\tau$-sincere), then $\Hom_{kQ}(P, I)\neq 0$ holds.
	\item We have that $\Hom_{kQ}(P, X)\neq 0$ and $\Hom_{kQ}(X, I)\neq 0$.
\end{enumerate}
\end{lem}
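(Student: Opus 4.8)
The plan is to reduce every $\Hom$-nonvanishing statement to positivity of the Euler form $\langle-,-\rangle$ on $K_0(kQ)$, exploiting that $kQ$ is hereditary. Write $P_i:=kQe_i$ and let $I_j$ denote the indecomposable injective $kQ$-module at a vertex $j$. I would lean on: (1) $\langle\dim M,\dim N\rangle=\dim_k\Hom_{kQ}(M,N)-\dim_k\Ext^1_{kQ}(M,N)$, and more precisely $\langle\dim P_i,\dim N\rangle=(\dim N)_i$ and $\langle\dim N,\dim I_j\rangle=(\dim N)_j$ for all $N\in\mod kQ$ (immediate from $\Ext^1_{kQ}(P_i,-)=0$, $\Ext^1_{kQ}(-,I_j)=0$, $\dim_k\Hom_{kQ}(P_i,N)=(\dim N)_i$ and $\dim_k\Hom_{kQ}(N,I_j)=(\dim N)_j$); (2) the Auslander--Reiten formula, which in particular forces $\Ext^1_{kQ}(M,N)=0$ whenever $\Hom_{kQ}(N,\tau M)=0$; (3) the Coxeter transformation $c$ on $K_0(kQ)$, which computes $\dim\tau M$ as $c(\dim M)$ when $M$ has no projective summand and $\dim\tau^{-1}M$ as $c^{-1}(\dim M)$ when $M$ has no injective summand, and which is an isometry for $\langle-,-\rangle$, so $\langle c^{-1}x,y\rangle=\langle x,cy\rangle$. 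I would also use the vanishings $\Hom_{kQ}(\cR,\cP)=\Hom_{kQ}(\cI,\cR)=\Hom_{kQ}(\cI,\cP)=0$ recalled above, together with the fact that every $P\in\ind\cP$ (resp.\ $I\in\ind\cI$) has the form $\tau^{-n}P_i$ (resp.\ $\tau^{m}I_j$) for some $n,m\geq0$ and some vertex.

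For (b), I would first note that, as $\tau$ permutes the indecomposable summands of $X\cong\tau X$, each summand is fixed up to isomorphism by a power of $\tau$, hence is regular (a preprojective or preinjective indecomposable is never $\tau$-periodic); thus $X$ is regular, $c\cdot\dim X=\dim\tau X=\dim X$, and $X$ is sincere. Writing $P=\tau^{-n}P_i$, one has $\tau P\in\cP\cup\{0\}$, so $\Hom_{kQ}(X,\tau P)=0$ and $\Ext^1_{kQ}(P,X)=0$, hence $\dim_k\Hom_{kQ}(P,X)=\langle\dim P,\dim X\rangle=\langle c^{-n}\dim P_i,\dim X\rangle=\langle\dim P_i,c^{n}\dim X\rangle=\langle\dim P_i,\dim X\rangle=(\dim X)_i>0$. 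Dually, for $I=\tau^{m}I_j$ one has $\Hom_{kQ}(I,\tau X)=\Hom_{kQ}(I,X)=0$, hence $\Ext^1_{kQ}(X,I)=0$ and $\dim_k\Hom_{kQ}(X,I)=\langle\dim X,\dim I\rangle=\langle\dim X,c^{m}\dim I_j\rangle=\langle c^{-m}\dim X,\dim I_j\rangle=\langle\dim X,\dim I_j\rangle=(\dim X)_j>0$.

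For (a), writing $P=\tau^{-n}P_i$ and $I=\tau^{m}I_j$ with $n,m\geq0$, the vanishing $\Hom_{kQ}(I,\tau P)=0$ (since $\tau P\in\cP\cup\{0\}$ and $\Hom_{kQ}(\cI,\cP)=0$) gives $\Ext^1_{kQ}(P,I)=0$, so $\dim_k\Hom_{kQ}(P,I)=\langle\dim P,\dim I\rangle=\langle c^{-n}\dim P_i,\,c^{m}\dim I_j\rangle$. Using that $c$ is an isometry, this equals both $\langle\dim P_i,\,c^{n+m}\dim I_j\rangle=(\dim\tau^{n+m}I_j)_i$ and $\langle c^{-(n+m)}\dim P_i,\,\dim I_j\rangle=(\dim\tau^{-(n+m)}P_i)_j$. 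Since $\tau^{n+m}I_j=\tau^{n}I$ and $\tau^{-(n+m)}P_i=\tau^{-m}P$ with $n,m\geq0$, the first expression is $\geq1$ when $I$ is $\tau$-sincere and the second is $\geq1$ when $P$ is $\tau^{-}$-sincere; either way $\Hom_{kQ}(P,I)\neq0$.

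Almost everything here is formal once the Euler-form identities and the Auslander--Reiten formula are in hand, so the care needed is essentially bookkeeping. One must check that each $\tau^{\pm}$-iterate written above is an honest nonzero module and that $\dim\tau^{\pm}$ is computed there by $c^{\pm1}$; this holds because $Q$ is extended Dynkin, hence connected and not Dynkin, so $\cP$, $\cR$, $\cI$ are disjoint, there are no projective-injective modules, and every module occurring is a preprojective indecomposable (never injective), a preinjective indecomposable (never projective), or a regular module. The one thing I expect to be easy to get wrong is keeping the direction of the Auslander--Reiten formula consistent with the normalization of the Coxeter transformation, which I would pin down on the Kronecker quiver before committing to the final write-up.
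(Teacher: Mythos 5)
Your proof is correct. The route differs technically from the paper's but is morally the same calculation. The paper uses the fact that $\tau^{\pm}$ is fully faithful between the subcategory of modules without projective summands and the subcategory of modules without injective summands: for (a) it writes $\Hom_{kQ}(P,I)\cong\Hom_{kQ}(\tau^{-\ell}P,\tau^{-\ell}I)$ with $\tau^{-\ell}I$ indecomposable injective and $\tau^{-\ell}P$ sincere, and for (b) similarly shifts $P$ back to a projective. You instead pass to $K_0$, use that $c$ is an isometry for $\langle-,-\rangle$, and invoke the Auslander--Reiten formula to discard the $\Ext^1$ term so that the Euler form computes $\dim\Hom$. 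The two arguments compute the same number: $\langle c^{-(n+m)}\dim P_i,\dim I_j\rangle=(\dim\tau^{-(n+m)}P_i)_j$ is exactly what the paper reads off as $\dim\Hom_{kQ}(\tau^{-\ell}P, I_j)$ with $\ell=n+m$. The Euler-form route is slightly heavier (one has to check that each $\tau^{\pm1}$-iterate is an honest module with no projective or injective summand so that $c^{\pm1}$ really computes its dimension vector, and to supply the AR-formula step), but it buys you the flexibility of getting both numerical expressions in (a) at once, so either sincerity hypothesis finishes the job; the paper's proof treats the two cases as formally dual. Both are legitimate and your bookkeeping (in particular $X$ regular and sincere, $\tau P\in\cP\cup\{0\}$, $\tau^{n+m}I_j=\tau^n I$ and $\tau^{-(n+m)}P_i=\tau^{-m}P$ with $n,m\ge0$) is all in order.
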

\begin{proof}
(a)
There is an integer $\ell\geq 0$ such that $\tau^{-\ell}I$ is an indecomposable injective module.
Then, since $\tau^{-\ell}P$ is sincere, we have $\Hom_{kQ}(P, I)=\Hom_{kQ}(\tau^{-\ell}P, \tau^{-\ell}I)\neq 0$.

(b)
We only show the former statement since the latter one is the dual.
Since $\tau X \simeq X$ and $Q$ is extended Dynkin, $X$ is sincere.
Take an integer $m\geq 0$ such that $\tau^mP$ is an indecomposable projective module.
Then $\Hom_{kQ}(P, X)=\Hom_{kQ}(\tau^m P,\tau^m X)=\Hom_{kQ}(\tau^m P,X)\neq 0$ as desired.
\end{proof}
\begin{prop}\label{prop-genP-H-I}
Let $P\in\cP$ be indecomposable and $\tau^-$-sincere. Then $\cH \vee \cI \subseteq\gen P$ holds.
\end{prop}
\begin{proof}
It suffices to show that each indecomposable object $X\in\cI$ and each simple object $X\in\cH$ belong to $\gen P$.
Let $f : P' \to X$ be a right $(\add P)$-approximation of $X$, and $C:=\Cok f$.
To prove $X\in\gen P$, it suffices to show $C=0$. 

We claim $C\in\cI$. If $X\in\cI$, then this is clear. If $X\in\cH$ is simple, then $\tau X\simeq X$ holds and hence $f\neq0$ by Lemma \ref{lem-pp-hom-non-zero}(b). Since $C\notin\cR$, we have $C\in\cI$.

Assume $C\neq 0$. By Lemma \ref{lem-pp-hom-non-zero}(a), there is a non-zero morphism $g : P\to C$.
Since $\Ext_{kQ}^1(P, \gen P)=0$, this $g$ factors through $\pi : X \to C$.
Since $f$ is a right $(\add P)$-approximation, $g$ factors through $\pi \circ f =0$, a contradiction.
Thus $C=0$ holds, as desired.
\end{proof}


\begin{proof}[Proof of Theorem \ref{upper not lower}]
We only prove (a) since (b) is its dual. The first assertion is Proposition \ref{prop-up-down-finite}.

(i)$\Rightarrow$(ii) This follows from Lemma \ref{lem-tors-pp-inf}(b).

(ii)$\Rightarrow$(iii) Since all but finitely many indecomposable preprojective $kQ$-modules are sincere by \cite[Proposition IX.5.6]{ASS}, $\cT$ contains a $\tau^-$-sincere indecomposable preprojective module $P$. By Proposition \ref{prop-genP-H-I}, we have $\cH\vee \cI\subseteq\Fac P\subseteq\cT$. 

(iii)$\Rightarrow$(i) It suffices to show that $\cT$ is lower infinite. This is clear since $\cT$ contains $\cT_n:=\add\{\tau^i(D(kQ))\mid 0\le i\le n\}\in\tors kQ$ for each $n\ge0$.
\end{proof}


\section{Preliminaries on Noetherian algebras}\label{section-pre-Noe}
\subsection{Basic results}\label{subsection-basic-result}
In this subsection, we prove basic properties of Noetherian algebras.
\begin{lem}\cite[(23.13) Proposition]{Curtis-Reiner}\label{lem-local-dense}
Let $R$ be a commutative Noetherian ring, $\p\in\Spec R$, and $\L$ a Noetherian $R$-algebra.
Then the functor $(-)_\p:\mod\L\to\mod\L_\p$ is dense.
\end{lem}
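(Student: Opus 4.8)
The plan is to prove essential surjectivity of $(-)_\p\colon\mod\L\to\mod\L_\p$ by producing, for an arbitrary $N\in\mod\L_\p$, an explicit finite presentation over $\L$ whose localization is isomorphic to $N$. First I would record the preliminary fact that $\L_\p=R_\p\otimes_R\L$ is itself a Noetherian ring: $R_\p$ is Noetherian, and $\L_\p$ is finitely generated as an $R_\p$-module, hence Noetherian as a ring. This is what makes the argument go through. Then, choosing a finite generating set of $N$ as a $\L_\p$-module gives a surjection $\L_\p^m\surjto N$ whose kernel $K\subseteq\L_\p^m$ is, by Noetherianity, again finitely generated over $\L_\p$, say by elements $w_1,\dots,w_r$.

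Next I would clear denominators. Every element of $\L_\p^m$ can be written as $v/s$ with $v\in\L^m$ and $s\in R\smallsetminus\p$, so after passing to a common denominator $s\in R\smallsetminus\p$ we may assume $w_j=v_j/s$ for suitable $v_j\in\L^m$. Since $s$ is invertible in $R_\p$, the elements $v_j/1=s\,w_j$ still generate $K$. Now define $\phi\colon\L^r\to\L^m$ by sending the $j$-th standard basis vector to $v_j$, and put $M:=\coker\phi\in\mod\L$.

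Finally, since $(-)_\p=R_\p\otimes_R(-)$ is exact and therefore commutes with cokernels, $M_\p\cong\coker(\phi_\p\colon\L_\p^r\to\L_\p^m)$; the image of $\phi_\p$ is the $\L_\p$-submodule of $\L_\p^m$ generated by the $v_j/1$, which is exactly $K$, whence $M_\p\cong\L_\p^m/K\cong N$. This proves density.

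The only point requiring any care is the finite generation of the kernel $K$ (equivalently, that finitely generated $\L_\p$-modules are finitely presented), which is precisely where Noetherianity of $\L_\p$ enters; everything else is the routine common-denominator bookkeeping. One could instead deduce the statement from the description of $\mod\L_\p$ as the Serre localization of $\mod\L$ at the modules annihilated by some element of $R\smallsetminus\p$, but the direct construction above seems shortest.
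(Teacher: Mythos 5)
Your proof is correct and is the standard clearing--denominators argument for essential surjectivity of localization on finitely generated modules. The paper itself offers no proof of this lemma and simply cites Curtis--Reiner [(23.13) Proposition], whose argument is the same: present $N$ finitely over the Noetherian ring $\L_\p$, lift the presentation matrix to $\L$ after multiplying by a common denominator from $R\setminus\p$, and use exactness of $(-)_\p$ to identify the localized cokernel with $N$.
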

Let $R$ be a commutative ring. For an ideal $I$ of $R$, let $V(I):=\{\p\in\Spec R\mid\p\supseteq I\}$. For an element $r\in R$, we denote by $R_r$ a localization of $R$ by a multiplicative set $\{r^i \mid i \geq 0\}$.
We start with the following standard observation.
\begin{lem}\label{lem-ractional-NZD}
Let $R$ be a commutative Noetherian ring and $\L$ a Noetherian $R$-algebra.
For $X, Y\in\sD^{\rm b}(\mod \L)$, assume that there exists $\p\in\Spec R$ such that $X_\p \simeq Y_\p$.
Then there exists $r\in R\setminus \p$ such that $X_r \simeq Y_r$.
In particular $U=\Spec R \setminus V((r))$ is a Zariski open neighborhood of $\p$ such that $X_\q \simeq Y_\q$ for any $\q \in U$.
\end{lem}
\begin{proof}
Let $f\in\Hom_{\sD^{\rm b}(\mod \L_\p)}(X_\p,Y_\p)$ be an isomorphism and $g:=f^{-1}$. Since $\Hom_{\sD^{\rm b}(\mod\L_\p)}(X_\p, Y_\p) \simeq R_\p \otimes_{R} \Hom_{\sD^{\rm b}(\mod \L)}(X, Y)$ holds, there exists $r\in R\setminus \p$ such that $rf$ and $rg$ are morphisms in $\sD^{\rm b}(\mod \L)$.
Then $f$ and $g$ are morphisms in $\sD^{\rm b}(\mod \L_r)$ such that there exists $r'\in R\setminus\p$ such that $r'gf=r'1_X$ and $r'fg=r'1_Y$. Then $f:X_{rr'}\to Y_{rr'}$ and $g:Y_{rr'}\to X_{rr'}$ are inverses of each other.
\end{proof}


\subsection{Methods of reverse filtrations}
We introduce the following notation which plays an important role to prove some properties of the map ${\rm r}_{\p\q}$.


\begin{dfn}\label{define reverse}
Let $A$ be a finite dimensional algebra over a field $k$, and $X,Y\in\mod A$.
We say that $X$ and $Y$ admit \emph{reverse filtrations} of each other if they have $A$-submodules
\begin{align}\label{reverse}
0=X^{\ell} \subseteq \dots \subseteq X^1 \subseteq X^0 =X\ \mbox{ and }\ 0=Y^0 \subseteq Y^1 \subseteq \dots \subseteq Y^{\ell} =Y
\end{align}
such that $Z^i:=X^{i-1}/X^i \simeq Y^i/Y^{i-1}$ holds as $\L$-modules for $1 \leq i \leq \ell$.
\end{dfn}

The observation below shows that two modules admitting reverse filtrations appear naturally in our study.
It plays a key role in the proof of Theorem \ref{thm-compatible-extDynkin}.
\begin{prop}\label{prop-filt-eq}
Let $R$ be a discrete valuation ring with the maximal ideal $\m=R\pi$, $k=R/\m$ and $\L$ a Noetherian $R$-algebra.
Let $X, Y\in\mod\L$ such that $X, Y$ are free of finite rank over $R$ and $X_0\simeq Y_0$.
Then $k\otimes_{R}X$ and $k\otimes_{R}Y$ admit reverse filtrations.
\end{prop}
\begin{proof}
Let $K$ be a quotient field of $R$.
Fix $R$-bases of $\cX$ of $X$ and $\cY$ of $Y$. Then the isomorphism $X_0\simeq Y_0$ is given by a matrix $A\in\GL_n(K)$.
Taking a Smith normal form, there are $P, Q\in \GL_n(R)$ and $D=\diag(D_{-m},\ldots,D_0,D_1,\ldots,D_m)$ with $D_i=\pi^iI_{n_i}$ such that $A=PDQ$.
We may assume $A=D$. Take decompositions $\cX=\bigsqcup_{1\le i\le m}\cX_i$ and $\cY=\bigsqcup_{1\le i\le m}\cY_i$, where $\cX_i$ and $\cY_i$ correspond to the block $D_i$.

For $a\in\L$, let $M^a \in \M_n(R)$ (respectively, $N^a$) be a representation matrix of an action of $a$ on $X$ (respectively, $Y$) with respect to $\cX$ and $\cY$.
We write it as a block matrix $M^a=(M^a_{ij})_{1\le i,j\le m}$ (respectively, $N^a=(N^a_{ij})_{1\le i,j\le m}$) with respect to the decomposition $\cX=\bigsqcup_{1\le i\le m}\cX_i$ (respectively, $\cY=\bigsqcup_{1\le i\le m}\cY_i$).
Since $DM^a = N^aD$ holds, $\pi^iM^a_{ij}=\pi^{j}N^a_{ij}$ holds for each $i,j$. Thus the following properties hold.
\begin{enumerate}[\rm(i)]
\item If $i<j$ (respectively, $i>j$), then the entries of $M^a_{ij}$ (respectively, $N^a_{ij}$) belong to $\m$.
\item $M^a_{ii}=N^a_{ii}$ for each $i$.
\end{enumerate}
We denote by $\overline{(-)}:R\to k$ the natural projection. Then the action of $1\otimes a \in k \otimes_R \L$ on $k\otimes_R X$ (respectively, on $k\otimes_R Y$) with respect to the $k$-bases $1\otimes\cX$ and $1\otimes\cY$ is given by the matrix $\overline{M^a}=(\overline{M^a_{ij}})_{i,j}$ (respectively, $\overline{N^a}=(\overline{N^a_{ij}})_{i,j}$), which is block lower (respectively, upper) triangular by (i).
For each $i$, let $X^i$ (respectively, $Y^i$) be the subspace of $k\otimes_RX$ with basis $\bigsqcup_{i<j\le m}\cX_j$ (respectively, $k\otimes_RY$ with basis $\bigsqcup_{-m\le j\le i}\cY_j$) gives a $k\otimes_R\L$-submodule.
Then the filtrations $0=X^{m} \subseteq X^{m-1} \subseteq \dots \subseteq X^{-m-1} =k\otimes_{R}X$ and $0=Y^{-m-1}\subseteq \dots \subseteq Y^{m-1} \subseteq  Y^m=k\otimes_{R}Y$ satisfy $X^{i-1}/X^{i} \simeq Y^i/Y^{i-1}$ for each $i$ by (ii). Shifting the indices, we obtain the desired filtrations.
\end{proof}

We prepare the following technical observations.
For a finite dimensional algebra $A$ and $X\in\mod\L$, we denote by $\sT(X)$ the smallest torsion class of $\mod A$ containing $X$.

\begin{lem}\label{reverse lemma}
Let $Q$ be an extended Dynkin quiver. Assume that $X,Y\in\mod kQ$ admit reverse filtrations of each other as \eqref{reverse}. Then the following assertions hold.
\begin{enumerate}[\rm(a)]
\item If $X^{\rm pi}=0$ and there exists $1\le i\le\ell$ such that $Z^i\notin\cR$ holds in \eqref{reverse}, then $Y^{\rm pp}\neq0$. 
\item If $X^{\rm pi}=0$ and $X^{\rm pp}\neq0$, then $Y^{\rm pp}\neq0$.
\item If $X\in\cH$, then either $Y^{\rm pp}\neq0$ or {\rm(}$Y\in\cH$ and $\sT(X)=\sT(Y)${\rm)} holds.
\end{enumerate}
\end{lem}

\begin{proof}
(a) Let $i$ be the maximal integer such that $Z^i\notin\cR$.
Consider an exact sequence
\[
\xymatrix{0\ar[r]&
{X^{i} =
\left(
\begin{smallmatrix}
Z^{i+1} \\
Z^{i+2} \\
\vdots \\
Z^{\ell}
\end{smallmatrix}
\right)}\ar[r]&
{X=
\left(
\begin{smallmatrix}
Z^1 \\
Z^2 \\
\vdots \\
Z^{\ell}
\end{smallmatrix}
\right)}
\ar[r]&
{X/X^i =
\left(
\begin{smallmatrix}
Z^{1} \\
Z^{2} \\
\vdots \\
Z^{i}
\end{smallmatrix}
\right)}\ar[r]&0
}
\]
Since $X\in\cP\vee\cR$ and $X^i\in\cR$ holds by our choice of $i$, we have $X/X^i\in\cP\vee\cR$.
Since $Z^i$ is a submodule of $X/X^i$, we have $Z^i\in(\cP\vee\cR)\setminus\cR$.
Since $Y/Y^{i-1}$ has a filtration
by $Z^j\in\cR$ with $i+1\le j\le\ell$ and $Z^i\notin\cR$, we have $Y/Y^{i-1}\in(\cP\vee\cR)\setminus\cR$. Since $Y$ has $Y/Y^{i-1}$ as a factor module, we have $Y^{\rm pp}\neq0$.

(b) Since $X\notin\cR$, there exists $1\le i\le\ell$ such that $Z^i\notin\cR$. Thus the assertion follows from (a).

(c) If there exists $1\le i\le\ell$ such that $Z^i\notin\cR$, then (a) implies $Y^{\rm pp}\neq0$.
Thus we can assume that $Z^i\in\cR$ holds for each $i$.
Refining the filtrations, we may assume that all $Z^i$ are indecomposable.
Then all $Z^i$ belongs to $\cH$ by our assumption, and hence $Y\in\cH$ holds.
Since there are no extensions between two modules which belong to different tubes, we have $\sT(X) = \sT(Y)$.
\end{proof}

Later we will use the following result.

\begin{lem}\label{lem-XPHYT}
Let $(R,\m,k)$ be a discrete valuation ring with quotient field $K$, and $Q$ an extended Dynkin quiver.
Let $X,Y\in\mod RQ$ such that $X,Y$ are free of finite rank over $R$ and $X_0\simeq Y_0$.
Let $\cT\in\ntors kQ$.
If $k\otimes_RX\in(\cP^k\vee\cH^k)\setminus\cT$, then $k\otimes_RY\notin\cT$.
\end{lem}

\begin{proof}
Proposition \ref{prop-filt-eq}, $M:=k\otimes_RX$ and $N:=k\otimes_RY$ admit reverse filtrations.
Since $M\in\cP^k\vee\cH^k$, we have $M^{\rm pi}=0$.
First, assume $M\notin\cH^k$. Then $M^{\rm pp}\neq0$ holds, and Lemma \ref{reverse lemma}(b) implies $N^{\rm pp}\neq0$. Since $\cT\in\ntors RQ=[\cI,\cR\vee\cI]$, we have $N\notin\cT$, as desired.
Next, assume $M\in\cH^k$. Then Lemma \ref{reverse lemma}(c) implies $\sT(N)=\sT(M)\ {\not\subseteq}\ \cT$, and hence $N\notin\cT$ holds, as desired.
\end{proof}

\section{Preliminaries on path algebras of acyclic quivers over Noetherian rings}\label{section-path-Noe}

\subsection{Real Schur roots and exceptional modules}
Let $Q$ a finite acyclic quiver.
We denote by $\Phi^{\rm rS}$ the set of real Schur roots associated to $Q$.
For a field $k$, $X\in\mod kQ$ is \emph{exceptional} if $\Ext_{kQ}^1(X, X)=0$ and $\End_{kQ}(X)\simeq k$.
We denote by $\excep kQ$ the set of the isomorphism classes of exceptional $kQ$-modules.
By Kac's theorem \cite{Kac}, there exists a bijection 
\begin{align*}
\Dim : \excep kQ \simeq \Phi^{\rm rS}.
\end{align*}
Crawley-Boevey showed that this is a specialization of a certain bijection for $\bZ Q$-modules \cite{CB}.
A $\bZ Q$-module $X$ is \emph{exceptional} if $\Ext_{\bZ Q}^1(X, X)=0$ and $\End_{\bZ Q}(X)\simeq \bZ$.
We denote by $\excep \bZ Q$ the set of the isomorphism classes of exceptional $\bZ Q$-modules.
Then each $X\in\excep \bZ Q$ is free as a $\bZ$-module \cite[Theorem 2(1)]{CB}, and there exists a bijection \cite[Theorem 1]{CB}
\[
\urank : \excep \bZ Q \simeq \Phi^{\rm rS}, \quad X \mapsto \urank X:=(\rank e_i X)_i.
\]
We denote the inverse map by
\[M_{-} : \Phi^{\rm rS}\simeq\excep \bZ Q,\quad \alpha \mapsto M_{\alpha}.\]
Using this, we introduce a family of modules which plays an important role in this paper (\cite[Section 3]{IK2}, \cite{CB3}):
For a commutative ring $R$ and $\alpha \in\Phi^{\rm rS}$, we define an $RQ$-module by 
\begin{equation}\label{prop-cluster-stilt}
M_{\alpha}^R:=R\otimes_{\bZ}M_{\alpha}.
\end{equation}
By construction, $M^R_{\alpha}$ is free of finite rank as an $R$-module, and satisfies $\Ext^1_{RQ}(M^R_{\alpha}, M^R_{\alpha})=0$, $\End_{RQ}(M^R_{\alpha})\simeq R$ and $\projdim_{RQ} M^R_{\alpha} \leq 1$.
In particular, $M^R_{\alpha}$ is a partial tilting $RQ$-module.
Moreover, for each field $k$, $M^k_- : \Phi^{\rm rS} \simeq \excep kQ$ is a bijection whose inverse is $\Dim$.


For an extended Dynkin quiver $Q$, we have the following well-known observation.
\begin{prop}\label{prop-tube-k-K}
Let $Q$ be an extended Dynkin quiver. Then there exists a decomposition
\[\Phi^{\rm rS}=\Phi_{\rm pp}^{\rm rS}\sqcup\Phi_{\rm rg}^{\rm rS}\sqcup\Phi_{\rm pi}^{\rm rS}\]
such that, for an arbitrary field $k$, the bijection $M^k_{-}:\Phi^{\rm rS} \simeq \excep kQ$, $\alpha\mapsto M_\alpha^k$ is a gluing of the following bijections.
\begin{enumerate}[{\rm (i)}]
	\item A bijection $\Phi_{\rm pp}^{\rm rS}\simeq\ind \cP^k$.
	\item A bijection $\Phi_{\rm rg}^{\rm rS}\simeq\excep kQ \cap \cR^k$.
	\item A bijection $\Phi_{\rm pi}^{\rm rS}\simeq\ind \cI^k$.
\end{enumerate}
Moreover $\Phi_{\rm rg}^{\rm rS}$ is a finite set.
\end{prop}

\begin{proof}
Fix a field $k$, and consider the bijection $\Phi^{\rm rS}\simeq\excep kQ$ given in Proposition \ref{prop-cluster-stilt}.
Since each module in $\ind\cP^k\sqcup\ind\cI^k$ is exceptional \,\cite[IV.\,Corollary 2.15]{ASS}, $\excep kQ$ is a disjoint union $\ind\cP^k\sqcup(\excep kQ \cap \cR^k)\sqcup\ind\cI^k$.
Thus we obtain the corresponding decomposition $\Phi^{\rm rS}=\Phi_{\rm pp}^{\rm rS}\sqcup\Phi_{\rm rg}^{\rm rS}\sqcup\Phi_{\rm pi}^{\rm rS}$.
This decomposition does not depend on the choice of $k$ since objects in $\excep kQ$ are determined by the dimension vectors.

We prove the last statement.
An exceptional regular module belongs to a tube of $\cR_{\geq 2}$ and its regular length is at most the number of simple modules of the tube.
Thus $\Phi_{\rm rg}^{\rm rS}$ is a finite set.
\end{proof}
The following easy observation will be used later.
\begin{lem}\label{factor of M}
Let $Q$ be an extended Dynkin quiver and $\alpha\in\Phi_{\rm rg}^{\rm rS}\sqcup\Phi_{\rm pi}^{\rm rS}$. 
Then each factor $kQ$-module $N$ of $M_\alpha^k$ is a direct sum of $kQ$-modules of the form $M_\beta^k$ with $\beta\in\Phi_{\rm rg}^{\rm rS}\sqcup\Phi_{\rm pi}^{\rm rS}$.
\end{lem}
\begin{proof}
If $\alpha\in\Phi_{\rm pi}^{\rm rS}$, then $M_\alpha^k\in\cI^k$ and its factor module also belongs to $\cI^k$. Thus the assertion holds.
Assume $\alpha\in\Phi_{\rm rg}^{\rm rS}$. Then $N=N^{\rm rg}\oplus N^{\rm pi}$ holds.
Since $N^{\rm rg}$ is a factor module of $M_\alpha^k$ which is uniserial as an object in $\cR^k$, it follows that
$N^{\rm rg}$ is also an exceptional $kQ$-module. Thus the assertion holds.
\end{proof}

\subsection{Derived Auslander-Reiten translations}\label{subsection-DAR-trans}
For a commutative ring $R$ and a finite acyclic quiver $Q$, the \emph{derived Auslander-Reiten translation} is an endofunctor of $\sD(\Mod RQ)$ defined by
\[
{\boldsymbol{\tau}}_{RQ}(-):=\Hom_{R}(RQ, R)[-1]\otimes_{RQ}^{\bf L}(-).
\]
\begin{prop}\label{prop-global-tau}
Let $R\to S$ a ring homomorphism of commutative rings, and $Q$ a finite acyclic quiver.
\begin{enumerate}[{\rm (a)}]
	\item We have an isomorphism $S\otimes_R\Hom_R(RQ, R) \simeq \Hom_{S}(S Q, S)$ as $S Q$-$RQ$-bimodules.
	\item We have the following isomorphism of triangle functors $\sD(\Mod RQ)\to\sD(\Mod S Q)$:
	\[
	S\otimes_R^{\bf L} {\boldsymbol{\tau}}_{RQ}(-) \simeq {\boldsymbol{\tau}}_{S Q}(S\otimes_R^{\bf L} (-)).
	\]
\end{enumerate}
\end{prop}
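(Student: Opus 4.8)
The plan is to deduce both statements from standard base-change formulas, reducing (b) to (a). Throughout I use that, since $Q$ is finite and acyclic, $RQ$ is free as an $R$-module of finite rank (a basis being the set of paths of $Q$), and that $SQ\cong S\otimes_R RQ$ as rings.

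For (a): for any finitely generated projective $R$-module $P$, the canonical morphism $S\otimes_R\Hom_R(P,R)\to\Hom_S(S\otimes_R P,S)$, $s\otimes\varphi\mapsto(s'\otimes x\mapsto ss'\varphi(x))$, is an isomorphism --- it is tautological for $P=R$ and both sides commute with finite direct sums. Applying this to $P=RQ$ yields an $R$-linear isomorphism $S\otimes_R\Hom_R(RQ,R)\isoto\Hom_S(SQ,S)$, and it remains only to check that it respects the relevant module structures: $\Hom_R(RQ,R)$ carries the $RQ$-bimodule structure dual to ${}_{RQ}(RQ)_{RQ}$, its left $RQ$-action extends to an $SQ=S\otimes_R RQ$-action on the left-hand side, and on $\Hom_S(SQ,S)$ one takes the left $SQ$-action coming from left multiplication and the right $RQ$-action obtained by restriction along $RQ\to SQ$. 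This compatibility is a direct computation on a dual basis of paths.

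For (b): first I would make explicit that the functor $S\otimes_R^{\bf L}(-)\colon\sD(\Mod RQ)\to\sD(\Mod SQ)$ is canonically identified with $SQ\otimes_{RQ}^{\bf L}(-)$. Indeed, a K-projective resolution $P\to X$ over $RQ$ consists of $R$-projective (hence $R$-flat) modules, as $RQ$ is $R$-free, and it is K-flat over $R$ because it is K-flat over $RQ$ and $RQ$ is $R$-flat: for any acyclic complex $A$ of $R$-modules, $A\otimes_R RQ$ is acyclic, hence so is $(A\otimes_R RQ)\otimes_{RQ}P\cong A\otimes_R P$; therefore $S\otimes_R^{\bf L}X=S\otimes_R P=SQ\otimes_{RQ}P=SQ\otimes_{RQ}^{\bf L}X$. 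Writing $D:=\Hom_R(RQ,R)$, so that $\boldsymbol{\tau}_{RQ}(-)=D[-1]\otimes_{RQ}^{\bf L}(-)$ and $\boldsymbol{\tau}_{SQ}(-)=\Hom_S(SQ,S)[-1]\otimes_{SQ}^{\bf L}(-)$, I would then run the chain of isomorphisms, natural in $X$,
\begin{align*}
S\otimes_R^{\bf L}\boldsymbol{\tau}_{RQ}(X)
&\simeq SQ\otimes_{RQ}^{\bf L}\bigl(D[-1]\otimes_{RQ}^{\bf L}X\bigr)
\simeq\bigl(SQ\otimes_{RQ}^{\bf L}D\bigr)[-1]\otimes_{RQ}^{\bf L}X\\
&\simeq\Hom_S(SQ,S)[-1]\otimes_{RQ}^{\bf L}X
\simeq\Hom_S(SQ,S)[-1]\otimes_{SQ}^{\bf L}\bigl(SQ\otimes_{RQ}^{\bf L}X\bigr)
=\boldsymbol{\tau}_{SQ}\bigl(S\otimes_R^{\bf L}X\bigr),
\end{align*}
where the outer isomorphisms are the identification above and associativity of the derived tensor product (together with $\Hom_S(SQ,S)\otimes_{SQ}^{\bf L}SQ\simeq\Hom_S(SQ,S)$), and the crucial middle one uses that $SQ\otimes_{RQ}^{\bf L}D$ is concentrated in degree $0$: since $D$ is $R$-free of finite rank, tensoring an $RQ$-projective resolution $P_\bullet\to D$ with $SQ$ over $RQ$ gives $S\otimes_R P_\bullet$, an $R$-flat resolution of the $R$-flat module $D$, so the higher homology vanishes and $SQ\otimes_{RQ}^{\bf L}D\simeq SQ\otimes_{RQ}D\simeq S\otimes_R D\simeq\Hom_S(SQ,S)$ by (a), as $SQ$-$RQ$-bimodules. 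Since every isomorphism in the chain is natural in $X$ and compatible with shifts, they assemble to an isomorphism of triangle functors.

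I expect the main obstacle to be the careful bookkeeping of the left/right $RQ$- and $SQ$-module structures as they are transported through the associativity isomorphisms, and pinning down the bimodule-level form of (a) so that the identification $SQ\otimes_{RQ}^{\bf L}D\simeq\Hom_S(SQ,S)$ is made as $SQ$-$RQ$-bimodules. Once the $R$-freeness of $RQ$ is exploited --- both for the exactness of $SQ\otimes_{RQ}(-)$ on $D$ and for the comparison $S\otimes_R^{\bf L}(-)\simeq SQ\otimes_{RQ}^{\bf L}(-)$ --- the remainder is formal homological algebra.
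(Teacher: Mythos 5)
Your proof is correct and takes essentially the same route as the paper's: part (a) is the standard base-change isomorphism for a finitely generated projective $R$-module (the paper records it as $S\otimes_R\Hom_R(RQ,R)\simeq\Hom_R(RQ,S)\simeq\Hom_S(SQ,S)$), and part (b) is the same associativity manipulation of derived tensor products, with the identification $S\otimes_R^{\bf L}(-)\simeq SQ\otimes_{RQ}^{\bf L}(-)$ made explicit up front rather than by inserting $\Hom_S(SQ,S)\otimes_{SQ}^{\bf L}(S\otimes_R^{\bf L}RQ)\otimes_{RQ}^{\bf L}(-)$ midstream. One sentence is a bit loosely phrased (calling $S\otimes_RP_\bullet$ ``an $R$-flat resolution of $D$''); the cleaner way to say it is that $D=\Hom_R(RQ,R)$ is $R$-free, so $S\otimes_R^{\bf L}D=S\otimes_RD$, whence $SQ\otimes_{RQ}^{\bf L}D\simeq S\otimes_R^{\bf L}D\simeq\Hom_S(SQ,S)$ by (a).
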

\begin{proof}
(a)
We have $S\otimes_R\Hom_R(RQ, R) \simeq \Hom_R(RQ, S) \simeq \Hom_S(S Q,S)$.

(b)
We have
\begin{align*}
S\otimes_R^{\bf L} {\boldsymbol{\tau}}_{RQ}(-) & \simeq S\otimes_R^{\bf L}\Hom_{R}(RQ, R)[-1]\otimes_{RQ}^{\bf L}(-)\\
& \stackrel{\rm(a)}{\simeq} \Hom_S(S Q,S)[-1]\otimes_{RQ}^{\bf L}(-)\\
& \simeq \Hom_S(S Q,S)[-1]\otimes_{S Q}^{\bf L}(S\otimes_R^{\bf L} RQ)\otimes_{RQ}^{\bf L}(-)\\
& \simeq \Hom_S(S Q,S)[-1]\otimes_{S Q}^{\bf L}\left(S\otimes_{R}^{\bf L}(-) \right)\\
& \simeq {\boldsymbol{\tau}}_{S Q}(S\otimes_R^{\bf L} (-)).\qedhere
\end{align*}
\end{proof}

Using the derived Auslander-Reiten translation, we deduce a useful observation on the usual Auslander-Reiten translation $\tau$.
\begin{lem}\label{lem-tau-cyclic-lift}
Let $R$ be a commutative Noetherian ring, and $Q$ a finite acyclic quiver.
For $\q\in\Spec R$ and an integer $\ell\geq 1$, the following statements hold.
\begin{enumerate}[{\rm (a)}]
	\item Let $X, Y\in\sD^{\rm b}(\mod RQ)$. If ${\boldsymbol{\tau}}^{\ell}_{R_\q Q}(X_\q) \simeq Y_\q$ in $\sD^{\rm b}(\mod R_\q Q)$, then there exists a Zariski open neighborhood $\q\in U\subset\Spec R$ such that ${\boldsymbol{\tau}}^{\ell}_{\kappa(\p)Q}(\kappa(\p)\otimes_R^{\bf L} X) \simeq \kappa(\p)\otimes_R^{\bf L} Y$ for any $\p \in U$.
	\item Assume that $\q$ is a minimal prime ideal of $R$ such that $R_\q$ is a field. Let $X, Y\in\mod RQ$. If $\tau^{\ell}(X_\q) \simeq Y_\q$ in $\mod R_\q Q$, then there exists a Zariski open neighborhood $\q\in U\subset\Spec R$ such that $\tau^{\ell}(\kappa(\p)\otimes_R X) \simeq \kappa(\p)\otimes_R Y$ for any $\p \in U$.
\end{enumerate}
\end{lem}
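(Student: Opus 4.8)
The plan is to reduce both statements to Proposition~\ref{prop-global-tau} and Lemma~\ref{lem-ractional-NZD} by means of the \emph{global} Auslander--Reiten translate ${\boldsymbol{\tau}}^\ell_{RQ}$, so the first task is to verify that ${\boldsymbol{\tau}}_{RQ}$ restricts to an endofunctor of $\sD^{\rm b}(\mod RQ)$. For this it suffices that the right $RQ$-module $\Hom_R(RQ,R)$ have projective dimension at most $1$. Tensoring the standard projective $RQ$-bimodule resolution
\[0\to\bigoplus_{\alpha\in Q_1}RQe_{s(\alpha)}\otimes_R e_{t(\alpha)}RQ\to\bigoplus_{i\in Q_0}RQe_i\otimes_R e_iRQ\to RQ\to0\]
over $RQ$ with $\Hom_R(RQ,R)$ on the left, and using that $\Hom_R(RQ,R)e_j\simeq\Hom_R(e_jRQ,R)$ is a free $R$-module, exhibits a length-one resolution of $\Hom_R(RQ,R)$ by projective right $RQ$-modules. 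Hence for a module $M\in\mod RQ$ the complex ${\boldsymbol{\tau}}^\ell_{RQ}(M)$ lies in $\sD^{\rm b}(\mod RQ)$ with cohomology in degrees $0,\dots,\ell$; over a field $k$, hereditarity of $kQ$ moreover forces ${\boldsymbol{\tau}}^\ell_{kQ}(M)$ to be formal, and a short induction on $\ell$ (the case $\ell=1$ being $H^0({\boldsymbol{\tau}}_{kQ}(M))=\Tor^{kQ}_1(\Hom_k(kQ,k),M)\simeq\tau M$, all higher $\Tor$'s vanishing) shows $H^0({\boldsymbol{\tau}}^\ell_{kQ}(M))\simeq\tau^\ell M$ and $H^n({\boldsymbol{\tau}}^\ell_{kQ}(M))=0$ for $n<0$.

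For part (a), put $Z:={\boldsymbol{\tau}}^\ell_{RQ}(X)\in\sD^{\rm b}(\mod RQ)$. Iterating Proposition~\ref{prop-global-tau}(b) with $S=R_\q$, and using flatness of $R_\q$ over $R$, gives $Z_\q\simeq{\boldsymbol{\tau}}^\ell_{R_\q Q}(X_\q)\simeq Y_\q$ in $\sD^{\rm b}(\mod R_\q Q)$. By Lemma~\ref{lem-ractional-NZD} there is $r\in R\setminus\q$ with $Z_r\simeq Y_r$, hence $Z_\p\simeq Y_\p$ for every $\p$ in the Zariski open neighbourhood $U=\Spec R\setminus V(r)$ of $\q$. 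Applying $\kappa(\p)\otimes^{\bf L}_{R_\p}(-)$ to $Z_\p\simeq Y_\p$ and invoking Proposition~\ref{prop-global-tau}(b) once more, now with $S=\kappa(\p)$, yields ${\boldsymbol{\tau}}^\ell_{\kappa(\p)Q}(\kappa(\p)\otimes_R^{\bf L}X)\simeq\kappa(\p)\otimes^{\bf L}_R Z\simeq\kappa(\p)\otimes^{\bf L}_R Y$ for all $\p\in U$, which is the claim.

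For part (b), set again $Z:={\boldsymbol{\tau}}^\ell_{RQ}(X)$; now the point is to pass from ${\boldsymbol{\tau}}^\ell$ to the honest $\tau^\ell$. Since $\q$ is a minimal prime with $R_\q$ a field, a standard generic-freeness argument lets us replace $R$ by a localization $R_r$ ($r\in R\setminus\q$) so that $X$ and each of the finitely many modules $H^0(Z),\dots,H^\ell(Z)$ is free as an $R$-module; then for every $\p$ the hyper-$\Tor$ spectral sequence for $\kappa(\p)\otimes^{\bf L}_R(-)$ applied to $Z$ degenerates, giving $H^0(\kappa(\p)\otimes^{\bf L}_R Z)\simeq\kappa(\p)\otimes_R H^0(Z)$. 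On the other hand, by Proposition~\ref{prop-global-tau}(b) and the freeness of $X$ one has $\kappa(\p)\otimes^{\bf L}_R Z\simeq{\boldsymbol{\tau}}^\ell_{\kappa(\p)Q}(\kappa(\p)\otimes_R X)$, whose $0$-th cohomology is $\tau^\ell(\kappa(\p)\otimes_R X)$ by the formality established above. Thus $\tau^\ell(\kappa(\p)\otimes_R X)\simeq\kappa(\p)\otimes_R H^0(Z)$ for all $\p$; specialising to $\q$, where $R_\q=\kappa(\q)$, and invoking the hypothesis $\tau^\ell(X_\q)\simeq Y_\q$ gives $(H^0(Z))_\q\simeq Y_\q$, so Lemma~\ref{lem-ractional-NZD} produces $r'\in R\setminus\q$ with $(H^0(Z))_{r'}\simeq Y_{r'}$. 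On $U=\Spec R\setminus V(r')$ we then obtain $\tau^\ell(\kappa(\p)\otimes_R X)\simeq\kappa(\p)\otimes_R H^0(Z)\simeq\kappa(\p)\otimes_R Y$, as desired.

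I expect the main obstacle to be the content of the first paragraph: one has to know that ${\boldsymbol{\tau}}_{RQ}$ preserves $\sD^{\rm b}(\mod RQ)$, so that Lemma~\ref{lem-ractional-NZD} is applicable and, in part (b), only finitely many cohomology modules intervene; and one has to pin down $H^0({\boldsymbol{\tau}}^\ell_{kQ}(M))\simeq\tau^\ell M$ over a field even though ${\boldsymbol{\tau}}^\ell$ and $\tau^\ell$ genuinely differ---by the injective ``debris'' in positive cohomological degrees coming from preprojective summands of $M$ that $\tau^{<\ell}$ sends to projectives. That this discrepancy is invisible in degree $0$ is precisely what makes the reduction go through; everything else is bookkeeping of base change and localization.
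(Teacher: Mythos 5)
Your part (a) follows the paper's proof essentially verbatim. Part (b), however, takes a genuinely different route. The paper builds an \emph{artificial} complex $Z=\bigoplus_i Y^i[-i]$ by lifting the cohomology modules $H^i(\boldsymbol{\tau}^\ell_{R_\q Q}(X_\q))$ one at a time through the density of $(-)_\q$ (Lemma~\ref{lem-local-dense}), using formality of $\sD^{\rm b}(\mod R_\q Q)$ only to check that $Z_\q\simeq\boldsymbol{\tau}^\ell_{R_\q Q}(X_\q)$, and then feeds the pair $(X,Z)$ into part~(a); the degree-$0$ cohomology of $\kappa(\p)\otimes^{\bf L}_R Z$ recovers $\kappa(\p)\otimes_R Y$ because the lifted pieces for $i\neq 0$ sit in the wrong degrees. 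You instead take the \emph{actual} global complex $Z=\boldsymbol{\tau}^\ell_{RQ}(X)$, shrink $\Spec R$ by generic freeness so that the finitely many $H^i(Z)$ become $R$-free, and let the hyper-$\Tor$ spectral sequence degenerate to identify $H^0(\kappa(\p)\otimes^{\bf L}_R Z)$ with $\kappa(\p)\otimes_R H^0(Z)$; Lemma~\ref{lem-ractional-NZD} then matches $H^0(Z)$ with $Y$ near $\q$. Both arguments rest on Proposition~\ref{prop-global-tau}(b), on the identity $H^0(\boldsymbol{\tau}^\ell_{kQ}(M))\simeq\tau^\ell M$ over a field, and on $\boldsymbol{\tau}_{RQ}$ preserving $\sD^{\rm b}(\mod RQ)$ (needed already in (a), since Lemma~\ref{lem-ractional-NZD} applies only to bounded complexes); you spell out the last point via $\projdim_{RQ}\Hom_R(RQ,R)\le 1$ obtained from the standard bimodule resolution, whereas the paper takes it for granted. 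Your version trades the paper's module-by-module lifting for a single localization plus a degeneration argument, which is arguably cleaner; the paper's version avoids any spectral-sequence input by designing $Z$ so that only the degree-$0$ piece can survive. Both are correct.
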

\begin{proof}
(a)
Let $Z:={\boldsymbol{\tau}}^{\ell}_{RQ}(X)$.
Applying Proposition \ref{prop-global-tau}(b) to the canonical map $R\to R_\q$, we have $Z_\q \simeq {\boldsymbol{\tau}}^{\ell}_{R_\q Q}(X_\q) \simeq Y_\q$.
By Lemma \ref{lem-ractional-NZD} there exists an open neighborhood $U$ of $\q$ such that $Z_\p \simeq Y_\p$ holds for any $\p\in U$.
In particular $\kappa(\p)\otimes_R^{\bf L} Z \simeq \kappa(\p)\otimes_{R_\p}^{\bf L} Z_\p \simeq  \kappa(\p)\otimes_{R_\p}^{\bf L} Y_\p \simeq \kappa(\p)\otimes_R^{\bf L} Y$ holds for any $\p\in U$.
Applying Proposition \ref{prop-global-tau}(b) to the canonical map $R\to\kappa(\p)$, we obtain ${\boldsymbol{\tau}}^{\ell}_{\kappa(\p)Q}(\kappa(\p)\otimes_R^{\bf L} X) \simeq \kappa(\p)\otimes_R^{\bf L} Z \simeq \kappa(\p)\otimes_R^{\bf L} Y$ for any $\p\in U$.

(b)
Let $Y^0:=Y$, and
for $i<0$, we take $Y^i\in\mod RQ$ such that $(Y^i)_\q \simeq H^{i} :=H^{i}({\boldsymbol{\tau}}^{\ell}_{R_\q Q}(X_\q))$ by Lemma \ref{lem-local-dense}(a). Let $Z:=\bigoplus_{i\le0} Y^i[-i]$.
Then $Z_\q= \bigoplus_{i\le 0}H^{i}[-i]={\boldsymbol{\tau}}^{\ell}_{R_\q Q}(X_\q)$ holds since $R_\q Q$ is a hereditary algebra over a field $R_\q$.
Applying (a) and Lemma \ref{lem-ractional-NZD} for $\Lambda:=R$, there exists an open neighborhood $U \subset \Spec R$ of $\q$ such that the following conditions are satisfied for each $\p\in U$.
\begin{enumerate}[\rm(i)]
\item $\kappa(\p)\otimes_R^{\bf L} Z \simeq {\boldsymbol{\tau}}^{\ell}_{\kappa(\p)Q}(\kappa(\p)\otimes_R^{\bf L} X)$.
\item $X_\p\in\proj R_\p$.
\end{enumerate}
For each $\p\in U$, we have $\kappa(\p)\otimes_R^{\bf L} X=\kappa(\p)\otimes_{R_\p}^{\bf L} X_\p\stackrel{{\rm (ii)}}{=}\kappa(\p)\otimes_{R_\p} X_\p=\kappa(\p)\otimes_RX$.
Taking $H^0$, we have
\begin{align*}
\kappa(\p)\otimes_R Y=H^0(\kappa(\p)\otimes_R^{\bf L} Y)&= H^0(\kappa(\p)\otimes_R^{\bf L} Z) \stackrel{{\rm (i)}}{\simeq} H^0({\boldsymbol{\tau}}^{\ell}_{\kappa(\p)Q}(\kappa(\p)\otimes_R^{\bf L} X))=\tau^{\ell}(\kappa(\p)\otimes_R X).\qedhere
\end{align*}
\end{proof}
Note that if $R$ satisfies $(R_1)$, then $R_\q$ is a field for each minimal prime ideal $\q$ of $R$.
The following lemma is used in the proof of Theorem \ref{thm-compatible-extDynkin}.

\begin{lem}\label{lem-M-0}
Let $R$ be a commutative Noetherian ring satisfying \emph{($R_1$)}, and $\q$ a minimal prime ideal of $R$.
Let $Q$ be an extended Dynkin quiver, and $X, Y\in\mod RQ$.
\begin{enumerate}[{\rm (a)}]
\item If $\Ext^i_{R_\q Q}(X_\q, Y_\q)=0$ for $i=0,1$ and $Y\in\proj R$, then there exists a Zariski open neighborhood $\q\in U\subset\Spec R$ such that $\Hom_{\kappa(\p)Q}(\kappa(\p)\otimes_R X, \kappa(\p)\otimes_R Y)=0$ for any $\p\in U$ of height one.
\item If $X_\q\in\cH^{R_\q}$, then there exists a Zariski open neighborhood $\q\in U\subset\Spec R$ such that $\kappa(\p)\otimes_R X \in\cH^{\kappa(\p)}$ for any $\p\in U$ of height one.
\end{enumerate}
\end{lem}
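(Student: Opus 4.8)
The plan is to prove (a) and (b) separately, using (a) as an ingredient for (b).

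For (a), the idea is to first spread the hypothesized vanishings from $\q$ to a Zariski neighborhood and then descend from $R_\p$ to the residue field $\kappa(\p)$. Since $RQ$ is module-finite over the Noetherian ring $R$, the $R$-modules $\Hom_{RQ}(X,Y)$ and $\Ext^1_{RQ}(X,Y)$ are finitely generated and their formation commutes with localization; their localizations at $\q$ are $\Hom_{R_\q Q}(X_\q,Y_\q)=0$ and $\Ext^1_{R_\q Q}(X_\q,Y_\q)=0$, so on the open set $U$ obtained by deleting the union of their (closed) supports we have $\Hom_{R_\p Q}(X_\p,Y_\p)=0=\Ext^1_{R_\p Q}(X_\p,Y_\p)$ for every $\p\in U$. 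Now fix $\p\in U$ of height one; by ($R_1$), $R_\p$ is a discrete valuation ring with uniformizer $\pi$, and $Y_\p$ is $R_\p$-free, so applying $\Hom_{R_\p Q}(X_\p,-)$ to $0\to Y_\p\xto{\pi}Y_\p\to\kappa(\p)\otimes_RY\to 0$ and using the two vanishings yields $\Hom_{R_\p Q}(X_\p,\kappa(\p)\otimes_RY)=0$. The extension-restriction adjunction along $R_\p Q\twoheadrightarrow\kappa(\p)Q$, under which $\kappa(\p)\otimes_RX=\kappa(\p)Q\otimes_{R_\p Q}X_\p$, then identifies this group with $\Hom_{\kappa(\p)Q}(\kappa(\p)\otimes_RX,\kappa(\p)\otimes_RY)$, giving the claim.

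For (b), I would first isolate two facts that hold over an arbitrary field $K$ containing $k$. Let $S_1,\dots,S_t$ be the finitely many regular simple $kQ$-modules lying in tubes of rank $\geq 2$. (1) By Proposition \ref{prop-tube-k-K}(ii), whose bijection preserves regular length and hence restricts to regular simples, $K\otimes_kS_1,\dots,K\otimes_kS_t$ are precisely the regular simple $KQ$-modules in tubes of rank $\geq 2$; combining this with $\cR^K=\cH^K\vee\cR^K_{\geq 2}$, with $\Hom_{KQ}(\cH^K,\cR^K_{\geq 2})=0$, and with the uniseriality of each indecomposable tube module in $\cR^K$ (so that it has a regular-simple top), one gets that a module $Z\in\cR^K$ lies in $\cH^K$ if and only if $\Hom_{KQ}(Z,K\otimes_kS_j)=0$ for all $j$. (2) If $Z\in\mod KQ$ satisfies $\tau Z\simeq Z$, then $Z\in\cR^K$: a preinjective summand of $Z$ of least distance to the injectives (respectively, a preprojective summand of greatest distance to the projectives) cannot be a summand of $\tau Z$, so both must be absent. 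Granting these, the argument runs as follows. From $X_\q\in\cH^{R_\q}$ we get $\tau X_\q\simeq X_\q$, so Lemma \ref{lem-tau-cyclic-lift}(b) with $\ell=1$ gives an open $U_0\ni\q$ with $\tau(\kappa(\p)\otimes_RX)\simeq\kappa(\p)\otimes_RX$, hence $\kappa(\p)\otimes_RX\in\cR^{\kappa(\p)}$ by (2), for all $\p\in U_0$. For each $j$, apply part (a) with $Y:=R\otimes_kS_j$, a finitely generated free $R$-module lying in $\mod RQ$: its localization at $\q$ is a regular simple $R_\q Q$-module in a rank $\geq 2$ tube, so $\Hom_{R_\q Q}(X_\q,(R\otimes_kS_j)_\q)=0$, while Auslander-Reiten duality together with $\tau X_\q\simeq X_\q$ gives $\Ext^1_{R_\q Q}(X_\q,(R\otimes_kS_j)_\q)\cong\kD\Hom_{R_\q Q}((R\otimes_kS_j)_\q,X_\q)=0$; so part (a) produces an open $U_j\ni\q$ with $\Hom_{\kappa(\p)Q}(\kappa(\p)\otimes_RX,\kappa(\p)\otimes_kS_j)=0$ for every height-one $\p\in U_j$. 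On $U:=U_0\cap U_1\cap\dots\cap U_t$, each height-one $\p$ has $\kappa(\p)\otimes_RX$ regular and annihilating all of $\kappa(\p)\otimes_kS_1,\dots,\kappa(\p)\otimes_kS_t$, so by fact (1) applied over $K=\kappa(\p)$ we conclude $\kappa(\p)\otimes_RX\in\cH^{\kappa(\p)}$.

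The step I expect to be the main obstacle is the descent from $R_\p$ to $\kappa(\p)$ in part (a): the implication ``$\Hom_{R_\p Q}(X_\p,Y_\p)=0$ forces $\Hom_{\kappa(\p)Q}(\kappa(\p)\otimes_RX,\kappa(\p)\otimes_RY)=0$'' is false in general, and it is the combination of the $R_\p$-freeness of $Y_\p$ with the vanishing of $\Ext^1_{R_\p Q}(X_\p,Y_\p)$ that makes it go through. The accompanying subtlety in (b) is to confirm, via Proposition \ref{prop-tube-k-K}(ii), that the regular simples of $\kappa(\p)Q$ in rank $\geq 2$ tubes are exhausted by the finitely many $\kappa(\p)\otimes_kS_j$, so that the conditions extracted from part (a) genuinely certify membership in $\cH^{\kappa(\p)}$.
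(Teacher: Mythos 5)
Your proof is correct and follows essentially the same route as the paper: for (a), spread $\Hom = \Ext^1 = 0$ from $\q$ to a Zariski open $U$ via the supports of $\Hom_{RQ}(X,Y)$ and $\Ext^1_{RQ}(X,Y)$, then at each height-one $\p$ apply $\Hom_{R_\p Q}(X_\p,-)$ to $0\to Y_\p\xto{\pi}Y_\p\to\kappa(\p)\otimes_RY\to 0$; for (b), use Lemma \ref{lem-tau-cyclic-lift}(b) to get regularity of $\kappa(\p)\otimes_RX$ on an open set, then apply (a) against the regular simples in $\cR_{\geq2}^k$ (the paper applies (a) once to their direct sum $S=\bigoplus_jS_j$ rather than once per $S_j$, but this is cosmetic) and invoke Proposition \ref{prop-tube-k-K}(ii) to certify membership in $\cH^{\kappa(\p)}$.
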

\begin{proof}
(a)
We prove that $U:=\Spec R \setminus \bigcup_{i=0,1}\supp(\Ext^i_{RQ}(X, Y))$ satisfies the desired condition. Clearly $\q\in U$ holds by our assumption.
For each $\p\in U$ of height one, we  can take $\pi\in R_\p$ such that $\p R_\p=\pi R_\p$ thanks to ($R_1$) condition. Since $Y_\p\in\proj R_\p$ by our assumption, we have an exact sequence
\[
0 \to Y_\p \xto{\pi} Y_\p\to \kappa(\p)\otimes_R Y \to 0.
\]
Applying $\Hom_{R_\p Q}(X_\p, -)$, we obtain an exact sequence
\[
0=\Hom_{R_\p Q}(X_\p, Y_\p) \xto{f} \Hom_{R_\p Q}(X_\p, \kappa(\p)\otimes_R Y)\to \Ext^1_{R_\p Q}(X_\p, Y_\p)=0.
\]
Thus $0=\Hom_{R_\p Q}(X_\p, \kappa(\p)\otimes_R Y)=\Hom_{\kappa(\p) Q}(\kappa(\p)\otimes_RX, \kappa(\p)\otimes_R Y)$ holds as desired.

(b)
Since $\tau(X_\q)\simeq X_\q$ and $R_\q$ is a field by ($R_1$), Lemma \ref{lem-tau-cyclic-lift}(b) implies that there is an open neighborhood $\q\in U\subset\Spec R$ such that $\tau(\kappa(\p)\otimes_R X) \simeq \kappa(\p)\otimes_R X$ holds for any $\p\in U$.
Thus we have
\begin{equation}\label{k(p)X in R}
\kappa(\p)\otimes_R X\in\cR^{\kappa(\p)}\ \mbox{ for any }\ \p\in U.
\end{equation}
Let $Y:=\bigoplus_{\alpha\in\Phi_{\rm rg}^{\rm rS}}M_\alpha^R\in\mod RQ$.
Since $X_\q\in\cH^{R_\q}$ and $Y_\q\in\cR_{\geq 2}^{R_\q}$, we have $\Ext^i_{R_\q Q}(X_\q, Y_\q)=0$ for $i=0,1$. By (a), there is an open neighborhood $\q\in U'\subset\Spec R$ such that
\begin{equation}\label{k(p)X,k(p)Y}
\mbox{$\Hom_{\kappa(\p)Q}(\kappa(\p)\otimes_R X, \kappa(\p)\otimes_R Y)=0$ holds for any $\p \in U'$ of height one}.
\end{equation} 
To prove that $U \cap U'$ satisfies the desired condition, fix $\p\in U\cap U'$ of height one. By Proposition \ref{prop-tube-k-K}(ii), $\kappa(\p)\otimes_RY=\bigoplus_{\alpha\in\Phi_{\rm rg}^{\rm rS}}M_\alpha^{\kappa(\p)}$ has all simple objects in $\cR_{\geq 2}^{\kappa(\p)}$ as a direct summand. Thus \eqref{k(p)X,k(p)Y} implies $\Hom_{\kappa(\p)Q}(\kappa(\p)\otimes_R X, \cR_{\geq 2}^{\kappa(\p)})=0$.
Since $\cR^{\kappa(\p)}=\cR_{\geq2}^{\kappa(\p)}\vee\cH^{\kappa(\p)}$ holds, \eqref{k(p)X in R} implies $\kappa(\p)\otimes_R X \in \cH^{\kappa(\p)}$ as desired.
\end{proof}
\section{Torsion classes of $RQ$}

Let $R$ be a commutative Noetherian ring and $\L$ a Noetherian $R$-algebra. We denote by $\sK^{\rm b}(\proj \L)$ the homotopy category of the category $\proj \L$ of finitely generated projective $\L$-modules. We say that $P\in \sK^{\rm b}(\proj \L)$ is \emph{presilting} if $\Hom(P, P[i])=0$ holds for any $i>0$, and $P$ is \emph{silting} if it is presilting and the smallest thick subcategory of $\sK^{\rm b}(\proj \L)$ containing $P$ is $\sK^{\rm b}(\proj \L)$. We say that a complex in $\sK^{\rm b}(\proj \L)$ is \emph{two-term} if it is concentrated in degree $-1$ and $0$. A \emph{two-term silting complex} is a two-term complex which is silting. We denote by $\twosilt\L$ the set of additive equivalent classes of two-term silting complexes, where $X$ and $Y$ are additive equivalent if $\add X = \add Y$ holds.


\subsection{Properties of ${\rm r_{\p\q}}$}\label{subsection-rpq-PRI}
In this subsection, for a commutative Noetherian ring $R$, we show various properties of the map
${\rm r_{\p\q}}$ from $\tors (\kappa(\p)Q)$ to $\tors(\kappa(\q)Q)$.
The following result was shown in \cite[Corollary 4.8(a), Proposition 4.17(b)]{Iyama-Kimura} under the assumption that $R$ contains a field, and now we are able to drop it.

\begin{prop}\label{prop-r-ftors-iso}
Let $R$ be a commutative Noetherian ring, and $Q$ a finite acyclic quiver.
Then for prime ideals $\p \supseteq \q$ of $R$, the following statements hold.
\begin{enumerate}[{\rm (a)}]
\item The map ${\rm r}_{\p\q}$ gives an isomorphism of posets:
\[{\rm r_{\p\q}}:\ftors(\kappa(\p)Q) \xto{\simeq} \ftors(\kappa(\q)Q).\]
\item Let $\alpha, \beta\in\Phi^{\rm rS}$. 
Then $M_{\beta}^{\kappa(\p)}\in\gen M_{\alpha}^{\kappa(\p)}$ implies $M_{\beta}^{\kappa(\q)}\in\gen M_{\alpha}^{\kappa(\q)}$.
\end{enumerate}
\end{prop}
Our proof shows that, for the set $\Clus(Q)$ of clusters \cite[Definition 3.1]{IK2}, the isomorphism ${\rm r}_{\p\q}$ commutes with the bijections $\Clus(Q) \simeq \ftors(\kappa(\p)Q)$ and $\Clus(Q) \simeq \ftors(\kappa(\q)Q)$ due to Ingalls-Thomas \cite{Ingalls-Thomas}.

\begin{proof}
(a) By \cite[Corollary 4.8]{Iyama-Kimura}, the squares of the following diagram is commutative and the map ${\rm r}_{\p\q}$ restricts to functorially finite torsion classes. 
\[\xymatrix@R0.5em@C3em{
 & \twosilt R_\p Q\ar[rr]^{\kappa(\p)\otimes_{R_{\p}}(-)}\ar[dd]^{(-)_{\q}} && \twosilt\kappa(\p)Q\ar[rr]^{\gen H^0(-)} &&\ftors\kappa(\p)Q\,\ar@{^{(}->}[r] \ar[dd]^{r_{\p\q}} & \tors\kappa(\p)Q \ar[dd]^{r_{\p\q}} \\ \\
 & \twosilt R_\q Q\ar[rr]^{\kappa(\q)\otimes_{R_{\q}}(-)} && \twosilt\kappa(\q)Q\ar[rr]^{\gen H^0(-)} &&\ftors\kappa(\q)Q\,\ar@{^{(}->}[r] & \tors\kappa(\q)Q
}\]
Since $R_\p Q$ and $R_\q Q$ are semi-perfect rings, \cite[Proposition 4.3]{Iyama-Kimura} implies that the left and the middle horizontal maps are isomorphisms of posets.
By \cite[Lemma 3.8(c)]{IK2}, $(-)_{\q}$ is an isomorphism of posets.
Thus the assertion holds.

(b)
By \cite[Theorem 3.4(a)]{IK2}, $M_{\alpha}^{R_\p}$ is a presilting $R_\p Q$-module. Thus $M_\beta^{R_\p}\in\gen M_\alpha^{R_\p}$ holds by \cite[Theorem 4.6(1)]{Iyama-Kimura}. Applying $\kappa(\q)\otimes_{R_\p}-$, we have $M_\beta^{\kappa(\q)}\in\gen M_\alpha^{\kappa(\q)}$.
\end{proof}

We use the following proposition to prove Proposition \ref{lem-rT-lift}.
\begin{prop}\label{prop-r-not-ftors}
Let $R$ be a commutative Noetherian ring, and $Q$ an extended Dynkin quiver. Then for prime ideals $\p\supseteq \q$ of $R$ and a torsion class $\cT\in\tors \kappa(\p)Q$, $\cT$ is functorially finite if and only if ${\rm r}_{\p\q}(\cT)$ is functorially finite. 
Thus ${\rm r}_{\p\q}$ gives a map
\[{\rm r_{\p\q}}:\ntors(\kappa(\p)Q) \to \ntors(\kappa(\q)Q).\]
\end{prop}
\begin{proof}
The ``only if'' part follows from Proposition \ref{prop-r-ftors-iso}(a).
To prove the ``if'' part, fix $\cT\in\tors \kappa(\p)Q$ which is not functorially finite.
By \cite[Theorem 3.1]{DIJ}, there is a strictly decreasing chain of functorially finite torsion classes $\mod kQ = \cT^0 \supset \cT^1 \supset \cT^2 \supset \dots \supset \cT$, and there is a strictly ascending chain of functorially finite torsion classes $0 = \cT'^0 \subset \cT'^1 \subset  \dots \subset \cT$.
By Proposition \ref{prop-r-ftors-iso}(a), each inclusion of $\cT^i$ (respectively, $\cT'^i$) is strict after applying ${\rm r}_{\p\q}$.
In particular, ${\rm r}_{\p\q}(\cT)$ is upper infinite and lower infinite.
By Theorem \ref{thm-ch-ftors-ext-Dynkin}(i) and (ii), ${\rm r}_{\p\q}(\cT)$ is not functorially finite, as desired.
\end{proof}
Recall that exceptional $kQ$-modules over field $k$ are parametrized by real Schur roots \eqref{prop-cluster-stilt}.
\begin{prop}\label{lem-rT-lift}
Let $(R,\m,k)$ be a discrete valuation ring with quotient field $K$, $Q$ an extended Dynkin quiver, $\cT\in\tors kQ$ and $\alpha\in\Phi^{\rm rS}$. Then $M_\alpha^{k}\in\cT$ holds if and only if $M_\alpha^{K}\in{\rm r}_{\m0}(\cT)$ holds.
\end{prop}
\begin{proof}
Let $\psi=\psi_{\m} : \tors kQ \to \tors RQ$.
The ``only if'' part is clear since $M_\alpha^{k}\in\cT$ implies $M_\alpha^{R}\in\psi(\cT)$ and hence $M_\alpha^{K}=K\otimes_{R}M_\alpha^{R}\in K\otimes_{R}\psi(\cT)={\rm r}_{\m 0}(\cT)$.

In the rest, we prove the ``if'' part.

(Case 1) Assume $\alpha\in\Phi_{\rm pp}^{\rm rS}$. Let $\cU:=\gen M_\alpha^{k}\in\tors kQ$. 
Since $M_{\alpha}^{R}$ is a partial tilting (hence presilting) $R Q$-module, ${\rm r}_{\m 0}(\cU)=\gen M_\alpha^{K}\subset {\rm r}_{\m 0}(\cT)$ holds by \cite[Corollary 4.8]{Iyama-Kimura}.
Since ${\rm r}_{\m 0}(\cU)$ and ${\rm r}_{\m 0}(\cT)$ contain $M_{\alpha}^{K}\in\cP^{K}$, they are functorially finite by Theorem \ref{thm-ch-ftors-ext-Dynkin}(iii)$\Rightarrow$(i). Thus $\cU$ and $\cT$ are functorially finite by Proposition \ref{prop-r-not-ftors}. By Proposition \ref{prop-r-ftors-iso}(a), we have $\cU\subset\cT$. Thus $M_{\alpha}^{k}\in\cT$.

(Case 2) Assume $\alpha\in\Phi_{\rm rg}^{\rm rS}\sqcup\Phi_{\rm pi}^{\rm rS}$.
By using induction on $\alpha$, we prove $M_{\alpha}^{k}\in\cT$.
Namely, for $\beta\in\Phi_{\rm rg}^{\rm rS}\sqcup\Phi_{\rm pi}^{\rm rS}$ such that $M_{\beta}^K\in{\rm r}_{\m 0}(\cT)$ and $\alpha > \beta$, we assume that $M_{\beta}^k\in\cT$.
Take $X\in\psi(\cT)$ such that $K \otimes_{R}X \simeq M_{\alpha}^{K}=K\otimes_{R}M_{\alpha}^{R}$.
By Proposition \ref{prop-filt-eq}, $N:=k \otimes_{R}X\in\cT$ and $M_{\alpha}^{k}=k\otimes_{R}M_{\alpha}^{R}$ admit reverse filtration.
Thus there exists non-zero morphism $f:N\to M_{\alpha}^{k}$.
We have a short exact sequence
\begin{align*}
0 \to \Im f \to M_{\alpha}^{k} \to \Cok f \to 0
\end{align*}
with $\Im f\in\cT$.
By Lemma \ref{factor of M}, there exists a decomposition $\Cok f=\bigoplus_{i=1}^\ell M_{\beta_i}^{k}$ with $\beta_i\in\Phi^{\rm rS}$.
Since $M_{\beta_i}^{k}\in\gen M_{\alpha}^{k}$, we have $M_{\beta_i}^{K}\in\gen M_{\alpha}^{K}\subset{\rm r}_{\m 0}(\cT)$ by Proposition \ref{prop-r-ftors-iso}(b).
Since $\Im f\neq 0$ implies $\alpha>\beta_i$, we have $M_{\beta_i}^{k}\in\cT$ by induction hypothesis. 
Thus $\Cok f\in\cT$ and hence $M_{\alpha}^{k}\in\cT$.
\end{proof}

The following proposition gives a key step in the proof of Theorem \ref{thm-compatible-extDynkin}.
\begin{thm}\label{key lemma}
Let $(R,\m,k)$ be a discrete valuation ring with quotient field $K$, $Q$ an extended Dynkin quiver, and $\cT\in\tors kQ$.
If $X\in\mod RQ$ satisfies that $k\otimes_RX\in\cH^k$ and $0\neq K\otimes_RX\in{\rm r}_{\m0}(\cT)\cap\cH^K$, then $k\otimes_RX\in\cT$ holds.
\end{thm}
To prove this, we need the following preparation.
\begin{prop}\label{prop-Xp-HI}
Let $R$ be a commutative Noetherian ring, $\p\supseteq \q$ in $\Spec R$ and $Q$ an extended Dynkin quiver.
Let $\cT\in\tors \kappa(\p)Q$ such that $\cT\cap \cP^{\kappa(\p)} \neq 0$ and ${\rm r}_{\p\q}(\cT) \cap \cH^{\kappa(\q)} \neq 0$.
Then we have $\cH^{\kappa(\p)}\vee \cI^{\kappa(\p)} \subseteq \cT$.
\end{prop}
\begin{proof}
Since $\cT\cap \cP^{\kappa(\p)} \neq 0$, Proposition \ref{prop-up-down-finite}(a) and Theorem \ref{thm-ch-ftors-ext-Dynkin}(i) and (ii) imply that $\cT$ is upper finite and functorially finite.
Thus by Proposition \ref{prop-r-ftors-iso}(a), ${\rm r}_{\p\q}(\cT)$ is also functorially finite.
Since ${\rm r}_{\p\q}(\cT) \cap \cH^{\kappa(\q)} \neq 0$, Lemma \ref{lem-pp-hom-non-zero}(b) implies ${\rm r}_{\p\q}(\cT)^{\perp}\cap\cI^{\kappa(\q)}= 0$.
Thus by Proposition \ref{prop-up-down-finite}(b), ${\rm r}_{\p\q}(\cT)$ is lower infinite. By Lemma \ref{lem-tors-pp-inf}(a), there is an infinite strictly descending chain ${\rm r}_{\p\q}(\cT)=\cT^0\supset\cT^1\supset\cT^2\supset\cdots$ in $\ftors\kappa(\q)Q$, and by Proposition \ref{prop-r-ftors-iso}(a), $\cT$ is also lower infinite. 
By Theorem \ref{upper not lower}(a)(i)$\Rightarrow$(iii), $\cT$ contains $\cH^{\kappa(\p)} \vee \cI^{\kappa(\p)}$.
\end{proof}

Now we are ready to prove Theorem \ref{key lemma}.

\begin{proof}[Proof of Theorem \ref{key lemma}]
Since $K\otimes_R X\in{\rm r}_{\m 0}(\cT)=K\otimes_R \psi(\cT)$, there exists $Y\in\psi(\cT)$ such that $K\otimes_R Y \simeq K\otimes_R X$.
Since $R$ is a discrete valuation ring, we can assume $Y\in\proj R$ by replacing $Y$ by $Y/\{y\in Y\mid (Ry)_0=0\}$.

Applying Proposition \ref{prop-filt-eq} to $X,Y\in\mod RQ$, it follows that $M:=k\otimes_RX$ and $N:=k\otimes_RY$ admit reverse filtrations. 
Since $M\in\cH^k$ holds by our assumption, Lemma \ref{reverse lemma}(c) implies that either $\sT(M)=\sT(N)$ or $N^{\rm pp}\neq0$ holds.
In the former case, $M\in\sT(N)\subset\cT$ holds, as desired.
In the latter case, since $0\neq N^{\rm pp}\in\cT\cap \cP^k$ and $0\neq K\otimes_RX\in{\rm r}_{\m 0}(\cT) \cap \cH^K$ hold, we have $\cH^k\subset\cT$ by  Proposition \ref{prop-Xp-HI}. Thus $M\in\cH^k\subset\cT$ holds, as desired.
\end{proof}

\subsection{Proof of Theorem \ref{thm-compatible-extDynkin}}\label{subsec-proof}
To prove Theorem \ref{thm-compatible-extDynkin}, we need some preparations.
\begin{lem}
Let $R$ be a commutative Noetherian ring and $\L$ a Noetherian $R$-algebra. 
Then a compatible element $\cX=(\cX^\p)_\p\in\bT_R(\L)$ belongs to $\Im\Phi_{\rm t}$ if and only if the following condition {\rm(C$_\q$)} is satisfied for each $\q\in\Spec R$.
\begin{enumerate}
\item[{\rm(C$_\q$)}] For each $X'\in\cX^\q\cap\brick (\kappa(\q)\otimes_R \L)$, there exists $X\in\mod\L$ such that $X_\q\simeq X'$ as $\L_\q$-modules and $\kappa(\p)\otimes_R X\in\cX^\p$ for each $\p\in\Spec R$.
\end{enumerate}
\end{lem}

\begin{proof}
Recall that we have a map
\[\Psi_{\rm t}:\bT_R(\L)\to\tors\L,\ \cX=(\cX^\p)_\p\mapsto\Psi(\cX):=\{X\in\mod\L\mid \forall\p\in\Spec R,\ \kappa(\p)\otimes_RX\in\cX^\p\}\]
satisfying $\Psi_{\rm t}\circ\Phi_{\rm t}=1_{\tors\L}$ \cite[Theorem 3.16]{Iyama-Kimura}. For $\cX\in\bT_R(\L)$, let $\cY=(\cY^\p)_\p:=\Phi_{\rm t}\circ\Psi_{\rm t}(\cX)\in\bT_R(\L)$.
Then $\cX\in\Im\Phi_{\rm t}$ if and only if $\cX=\cY$ if and only if $\cX^\p\subset\cY^\p$ for each $\p\in\Spec R$ if and only if $\cX^\q\cap\brick(\kappa(\q)\otimes_R \L)\subset\cY^\q$ for each $\q\in\Spec R$, where the last equivalence follows from \eqref{brick generate tors}.
The last condition means that (C$_\q$) holds for each $\q\in\Spec R$.
\end{proof}

To prove the condition (C$_\q$), the following observation is useful.

\begin{prop}{\cite[Proposition 3.34]{Iyama-Kimura}}\label{correction}
Let $R$ be a commutative Noetherian ring, $\L$ a Noetherian $R$-algebra, $\cX=(\cX^\p)_\p\in\bT_R(\L)$ a compatible element and $V$ a finite Zariski closed subset of $\Spec R$.
If $X\in\mod\L$ satisfies $\kappa(\p)\otimes_RX\in\cX^\p$ for each $\p\in V^{\rm c}$, then there exists a submodule $Y$ of $X$ such that $\kappa(\p)\otimes_RY\in\cX^\p$ for each $\p\in\Spec R$ and $X_\p\simeq Y_\p$ for all $\p\in V^{\rm c}$.
\end{prop}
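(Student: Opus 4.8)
The plan is to induct on $\#V$, removing the primes of $V$ one at a time starting from the minimal ones, and to localise so that the one-prime step becomes a statement over a local ring. If $V=\emptyset$, take $Y=X$. Otherwise, since $V$ is closed, $V(\p)\subseteq V$ for every $\p\in V$; choose $\p\in V$ minimal with respect to inclusion among the primes in $V$. Using minimality of $\p$, the set $V\setminus\{\p\}=\bigcup_{\q\in V,\ \q\neq\p}V(\q)$ is again a finite Zariski closed subset of $\Spec R$, and every prime strictly contained in $\p$ lies in $V^{\rm c}$, so $\kappa(\p')\otimes_RX\in\cX^{\p'}$ for all $\p'\subsetneq\p$. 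It therefore suffices to find a submodule $Y'\subseteq X$ with $\supp(X/Y')\subseteq V(\p)$ and $\kappa(\p)\otimes_RY'\in\cX^\p$: then $Y'_{\p''}\simeq X_{\p''}$ for all $\p''\in V^{\rm c}$, and $Y'$ satisfies the hypothesis of the proposition for the strictly smaller closed set $V\setminus\{\p\}$ (the prime $\p$ now lying in the complement, where $\kappa(\p)\otimes_RY'\in\cX^\p$), so the induction hypothesis applied to $(Y',V\setminus\{\p\})$ finishes the argument. Moreover, a submodule $Y_\p\subseteq X_\p$ over $R_\p$ of finite colength with $\kappa(\p)\otimes_{R_\p}Y_\p\in\cX^\p$ pulls back along $X\to X_\p$ to $Y':=X\times_{X_\p}Y_\p$, which has $\kappa(\p)\otimes_RY'=\kappa(\p)\otimes_{R_\p}Y_\p\in\cX^\p$ and $X/Y'\hookrightarrow X_\p/Y_\p$, hence $\supp(X/Y')\subseteq V(\p)$. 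So I may replace $R$ by $R_\p$ (noting $\cX$ restricts to a compatible element over $R_\p$) and assume $R$ is local with maximal ideal $\m:=\p$; the task reduces to finding $Y\subseteq X$ of finite colength with $\kappa(\m)\otimes_RY\in\cX^\m$, given that $\kappa(\p')\otimes_RX\in\cX^{\p'}$ for every non-maximal prime $\p'$.

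For this I would set $\cU:=\ov{\cX^\m}=\{Z\in\mod\L\mid\kappa(\m)\otimes_RZ\in\cX^\m\}$, which is closed under quotients and extensions and so is a torsion class of $\mod\L$, and take $Y:=t_\cU(X)$, the largest submodule of $X$ lying in $\cU$ (it exists since $\mod\L$ is Noetherian). Then $\kappa(\m)\otimes_RY\in\cX^\m$ is automatic, $(\cU,\cU^\perp)$ is a torsion pair in $\mod\L$, and $M:=X/Y$ lies in $\cU^\perp$; the whole remaining content is to prove that $M$ has finite length, i.e.\ $\supp M\subseteq\{\m\}$.

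The hard part will be exactly this finiteness, and it must genuinely use both the hypothesis and the compatibility of $\cX$: already when $\cX^\m=0$ one has $\cU=0$, $Y=0$, $M=X$, and finiteness of $X$ comes only from the hypothesis through the relation $\cX^{\p'}\subseteq{\rm r}_{\m\p'}(\cX^\m)=\kappa(\p')\otimes_R\cU=0$, which forces $X_{\p'}=0$ for all $\p'\subsetneq\m$. In general $\kappa(\p')\otimes_RM$ is a quotient of $\kappa(\p')\otimes_RX\in\cX^{\p'}$, hence lies in $\cX^{\p'}$ for every non-maximal $\p'$, and one must deduce from this, from $M\in\cU^\perp$, and from the compatibility relations ${\rm r}_{\m\p'}(\cX^\m)\supseteq\cX^{\p'}$ that $M$ has no non-maximal associated prime. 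The approach I would try is a secondary induction on $\dim R$: localise at a prime $\p'$ lying just below $\m$ (so $\dim R_{\p'}<\dim R$), apply the inductive statement over $R_{\p'}$, spread the resulting correction over a Zariski neighbourhood of $\p'$ using Lemma~\ref{lem-ractional-NZD}, and assemble the finitely many such corrections into a single submodule of $X$. Making these simultaneous corrections coherent — realised by one submodule, without undoing what has already been achieved at the other primes below $\m$ — is the delicate bookkeeping I anticipate being the main technical obstacle.
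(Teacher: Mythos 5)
Your reduction framework is sound and carefully checked: the Noetherian induction on $\#V$, the choice of a minimal $\p\in V$, the pullback construction $Y'=X\times_{X_\p}Y_\p$, and the verification that the inductive hypothesis applies to $(Y',V\setminus\{\p\})$ all go through. The choice $Y=t_\cU(X)$ with $\cU=\ov{\cX^\m}$ is also the right one in the following sense: if \emph{any} submodule $Y$ satisfying the conclusion exists it must lie in $\cU$, hence in $t_\cU(X)$, so $X/t_\cU(X)$ would be a quotient of $X/Y$ and therefore of finite length. So you have correctly identified that everything reduces to the local assertion that $\supp(X/t_\cU(X))\subseteq\{\m\}$.

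But that assertion is the entire content of the proposition, and you explicitly do not prove it. Moreover the secondary induction on $\dim R$ that you sketch does not close the gap. Two concrete problems: (i) there may be infinitely many primes $\p'$ of height $\dim R-1$ below $\m$, and the open neighbourhoods produced by Lemma~\ref{lem-ractional-NZD} have no reason to cover the punctured spectrum, so ``assemble the finitely many corrections'' has no basis; (ii) applying the proposition over $R_{\p'}$ is vacuous, since the hypothesis ``$\kappa(\q)\otimes_RX\in\cX^\q$ for all $\q\subsetneq\m$'' already covers every prime of $\Spec R_{\p'}$, so $Y_{\p'}=X_{\p'}$ works trivially and the induction yields no new information about $X$. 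The real obstruction is the one you point at but do not resolve: from $M:=X/t_\cU(X)\in\cU^\perp$ and the compatibility inclusion $\cX^{\p'}\subseteq\kappa(\p')\otimes_R\cU$ one knows only that $\kappa(\p')\otimes_RM$ is \emph{abstractly isomorphic} to $\kappa(\p')\otimes_RW$ for some $W\in\cU$; to contradict $\Hom_\L(\cU,M)=0$ one would need to lift this to a nonzero $\L$-morphism $W\to M$, and the lifting obstruction sits in $\Ext^1_{\L_{\p'}}(W_{\p'},\p'M_{\p'})$ with no evident reason to vanish. That lifting argument (or a replacement for it) is exactly the content of \cite[Proposition 3.30]{Iyama-Kimura}, and your proposal stops precisely where it begins. (For the record: this paper does not reprove the statement; it cites it from the earlier work, so there is no in-paper proof to compare against.)
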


For example, (C$_\q$) is automatic if $V(\q)$ is a finite set. In fact, for each $X'\in\cX^\q$, there exists $X\in\mod\L$ such that $\supp X\subseteq V(\q)$ and $X_\q\simeq X'$ \cite[Lemma 2.3(b)]{Iyama-Kimura}. Applying Proposition \ref{correction} to $V:=V(\q)\setminus\{\q\}$, we obtain $Y\in\mod\L$ satisfying the desired condition in (C$_\q$).

\begin{proof}[Proof of Theorem \ref{thm-compatible-extDynkin}.]
Let $\cX=(\cX^\p)_\p\in\bT_R(RQ)$ be a compatible element. We will show that (C$_\q$) holds for each $\q\in\Spec R$. As remarked above, this is automatic if $V(\q)$ is finite.
By our assumption (II), we have $\dim R\le 2$ and $V(\q)$ is finite for all non-minimal prime ideals $\q$. Thus it suffices to prove (C$_\q$) for each minimal prime ideal $\q$ of $R$.
Fix $X'\in\cX^{\q}\cap\brick\kappa(\q)Q$. It suffices to show that there exists $X\in\mod RQ$ satisfying the following conditions.
\begin{enumerate}[\rm(i)]
\item $X_\q\simeq X'$ and $\supp X\subset V(\q)$.
\item[(ii)] There exists a Zariski open neighborhood $\q\in U\subset\Spec R$ satisfying $\kappa(\p)\otimes_R X\in\cX^\p$ for all $\p\in U$ of height one.
\end{enumerate}
In fact, $V:=U^{\rm c}\cap V(\q)$ is a finite Zariski closed subset of $\Spec R$ by our assumption (II), and applying Proposition \ref{correction} to $X$, we obtain $Y\in\mod\L$ satisfying the desired condition (C$_\q$).

We divide into two cases. First, we assume $X'\in\excep\kappa(\q)Q$. By \eqref{prop-cluster-stilt}, there exists $\alpha\in\Phi^{\rm rS}$ such that $X'=M_\alpha^{\kappa(\q)}$. Then $X:=M_\alpha^{R/\q}\in\mod RQ$ satisfies the condition (i). It also satisfies the condition (ii) for $U:=\Spec R$. In fact, take any $\p\in\Spec R$ with height one. If $\p\notin V(\q)$, then $\kappa(\p)\otimes_RX=0\in\cX^\p$ holds. If $\p\in V(\q)$, then $M_\alpha^{\kappa(\q)}=X'\in\cX^{\q}\subset{\rm r}_{\p\q}(\cX^\p)$ holds. Since $R$ satisfies ($R_1$), Proposition \ref{lem-rT-lift} implies $\kappa(\p)\otimes_RX=M_\alpha^{\kappa(\p)}\in\cX^\p$, as desired.


In the rest, we assume $X'\notin\excep\kappa(\q)Q$. Then $X'\in\cH^{\kappa(\q)}$ holds. By Lemma \ref{lem-local-dense}, there exists $X\in\mod RQ$ such that $X_\q\simeq X'$. We can assume $\supp X\subset V(\q)$ by replacing $X$ by $X/\q X$. Thus (i) is satisfied.
By our assumption (I) and Lemma \ref{lem-M-0}(b), there is an open neighborhood $\q\in U$ such that 
\begin{align}\label{kpX-Hp}
\mbox{$\kappa(\p)\otimes_R X\in\cH^{\kappa(\p)}$ for any $\p\in U$ of height one.}
\end{align}
Moreover, since $R_\q$ is a field, we can assume that $X_\p\in\proj R_\p$ holds for any $\p\in U$ by applying Lemma \ref{lem-ractional-NZD} for $\Lambda:=R$ and making $U$ smaller.

To prove that $U$ satisfies the condition (ii), we fix $\p\in U$ of height one.
We can apply Theorem \ref{key lemma} to $(R,X):=(R_\p,X_\p)$ and the compatible element $(\cX^{\p},\cX^0)$ for $R_\p Q$ since $\kappa(\p)\otimes_{R_\p}X_\p\in\cH^{\kappa(\p)}$ and $X_\q\simeq X'\in\cH^{\kappa(\q)}$ hold by our assumptions. Consequently, we obtain $\kappa(\p)\otimes_{R_\p}X_\p\in\cX^\p$, as desired.
\end{proof}

\section{Torsion classes for Kronecker quiver over Dedekind domains}\label{section-tors-Kro}
In this section, let $\begin{tikzpicture}[baseline=-3]
    \node(A) at (0, 0) {$Q=\ 1$};
    \node(B) at (2, 0) {$2$};
    \draw[->] ([yshift=2pt ]A.east) --  node[midway, above] {\scriptsize$a$} ([yshift=2pt ]B.west);
    \draw[->] ([yshift=-2pt ]A.east) -- node[midway, below] {\scriptsize$b$} ([yshift=-2pt ]B.west) ;
\end{tikzpicture}$
be the Kronecker quiver. For each field $k$, $\ftors kQ$ has the following Hasse quiver, where $M_{i,j}^k$ shows the torsion class $\gen M^k_{i\alpha_1+j\alpha_2}$ generated by the exceptional $kQ$-module $M^k_{i\alpha_1+j\alpha_2}$ corresponding to $i\alpha_1+j\alpha_2\in\Phi^{\rm rS}$ given in \eqref{prop-cluster-stilt}:
\begin{align}\label{hasse}
\xymatrix@R0em@C1.5em{
&&&&M_{0,1}^k\ar[drrrr]\\
\mod kQ\ar[dr]\ar[urrrr]&&&&&&&&0.\\
&M_{1,2}^k\ar[r]&M_{2,3}^k\ar[r]&M_{3,4}^k\ar[r]&\cdots\ar[r]& M_{3,2}^k\ar[r]&M_{2,1}^k\ar[r]&M_{1,0}^k\ar[ur]
}\end{align}
Now let $R$ be a Dedekind domain. By Theorem \ref{thm-compatible-extDynkin}, we have an isomorphism of posets 
\[\tors RQ \simeq \Big\{(\cX^\p)_\p\in \prod_{\p\in\Spec R}\tors\kappa(\p)Q \mid {}^{\forall}\m\in \Spec R\setminus\{0\},\,{\rm r}_{\m0}(\cX^\m) \supseteq \cX^0\Big\}, \cT\mapsto(\kappa(\p)\otimes_R\cT)_{\p\in\Spec R}.\]
The aim of this section is to give an explicit description of the map ${\rm r}_{\m0}:\tors\kappa(\m)Q\to\tors\kappa(0)Q$. 
By Proposition \ref{prop-r-ftors-iso}(a), it restricts to an isomorphism ${\rm r}_{\m0}:\ftors\kappa(\m)Q\simeq\ftors\kappa(0)Q$ of posets with the Hasse quiver \eqref{hasse}. 
Thus it remains to describe the map ${\rm r}_{\m0}:\ntors\kappa(\m)Q\to\ntors\kappa(0)Q$ given in Proposition \ref{prop-r-not-ftors}.

\subsection{The ${\rm r}_{\m0}$ map for Kronecker quiver }
For a field $k$, let
\[\irr k[x]:=\{\mbox{irreducible polynomials in $k[x]$}\}/\sim,\]
where we write $f\sim g$ if $f=\lambda g$ holds for some $\lambda \in k^{\times}$. Let
\[|\bP_k^1|:=\irr k[x] \sqcup \{x^{-1}\}.\]
For $f \in k[x]$, we define a $kQ$-module $M_f^k$ by
\begin{align*}
\begin{tikzpicture}[baseline=0]
    \node(A) at (0, 0) {$M_f^k:=\ \displaystyle k[x]/(f)$};
    \node(B) at (3, 0) {$\displaystyle k[x]/(f)$};
    \draw[->] ([yshift=2pt ]A.east) --  node[midway, above] {\scriptsize$1$} ([yshift=2pt ]B.west);
    \draw[->] ([yshift=-2pt ]A.east) -- node[midway, below] {\scriptsize$x$} ([yshift=-2pt ]B.west) ;
\end{tikzpicture}.
\end{align*}
Then, for the power set $\sP(|\bP_k^1|)$ of $|\bP_k^1|$, we have an isomorphism of posets
\[
\sT_k : \sP(|\bP_k^1|) \stackrel{\sim}{\longrightarrow} \ntors kQ, \quad \cS \mapsto \sT_k(\cS):=\sT(M_f^k \mid f \in \cS) \vee \cI^k.
\]

From now on, let $(R, \m, k)$ be a discrete valuation ring with its quotient field $K$.
For each $f \in \irr K[x]$, there exists an element $r\in R$ such that $rf \in R[x]$ and $rf$ is primitive in $R[x]$ (that is, the greatest common divisor of all coefficients of $rf$ is a unit element of $R$).
Hence we denote by $\irr_R K[x]$ a complete set representative of $\irr K[x]$ consisting of primitive polynomials of $R[x]$.
The canonical surjection $\overline{(-)} : R \to k$ gives a surjection $\overline{(-)} : R[x] \to k[x]$.
Let
\[|\bP_K^1| := \irr_R K[x] \sqcup\{x^{-1}\}.\]
For $g\in k[x]$, let $V(g):=\{f\in\irr k[x]\mid g\in(f)\}$.
For $f(x)=r_0 + r_1x + \dots + r_dx^d \in R[x]$ with $r_d\neq 0$, let
\[W(f):=\left\{\begin{array}{ll}V(\ov{f}) \sqcup \{x^{-1}\}&r_d \in \m\\
V(\ov{f})&\mbox{else.}\end{array}\right.\]
Moreover $W(x^{-1}) := \{x^{-1}\}$.
We define a map
\[
{\rm r} : \sP(|\bP_k^1|) \longrightarrow \sP(|\bP_K^1|), \quad \cS \mapsto {\rm r}(\cS):=\{ f \in |\bP_K^1| \mid W(f) \subseteq \cS\}.
\]
Clearly, ${\rm r}$ is an order preserving map satisfying ${\rm r}(\emptyset)=\emptyset$ and ${\rm r}(|\bP_k^1|)=|\bP_K^1|$. One can also check that ${\rm r}$ is an embedding of posets.
The following is the main theorem of this section.
\begin{thm}\label{thm-rT=Tr}
For each $\cS\in\sP(|\bP_k^1|)$, we have ${\rm r}_{\m 0}(\sT_k(\cS)) = \sT_K({\rm r}(\cS))$. Thus we have a commutative diagram of posets:
\[
\xymatrix@R0.5em@C3em{
\sP(|\bP_k^1|) \ar[rr]^{\rm r} \ar[dd]^{\sT_k} && \sP(|\bP_K^1|) \ar[dd]^{\sT_K} \\ \\
\ntors kQ \ar[rr]^{{\rm r}_{\m 0}} && \ntors KQ.
}
\]
\end{thm}


\subsection{Proof of Theorem \ref{thm-rT=Tr}}
Let $R$ be a commutative ring.
Any $RQ$-module is presented by a 4-tuple $(M_1, M_2, f_a, f_b)$, where $M_1$ and $M_2$ are $R$-modules, and $f_a$ and $f_b$ are morphisms of $R$-modules from $M_1$ to $M_2$.

Let $R[x]$ be the ring of polynomials in one variable over $R$. For $f(x)\in R[x]$, define $RQ$-modules by
\begin{align*}
M_f^R:=(R[x]/(f), R[x]/(f), 1, x)\ \mbox{ and }\ 
N_f^R:=(R[x]/(f), R[x]/(f), x, 1).
\end{align*}
Moreover, let $M_{x^{-i}}^R:=N_{x^i}^R$ for each $i\ge0$.
We give some preliminary observations for $M_f^R$ and $N_f^R$.
\begin{lem}\label{lem-Mf=sum}
Let $k$ be a field, and $f(x)\in k[x]$ be non-zero. Take a decomposition $f\sim f_1^{\ell_1} \cdots f_m^{\ell_m}$
such that each $f_i$ is irreducible, $\ell_i\geq1$ and $f_i\not\sim f_j$ for each $i\neq j$.
\begin{enumerate}[\rm(a)]
\item We have $M_f^k \simeq \bigoplus_{i=1}^m M_{f_i^{\ell_i}}^k$ and $N_f^k \simeq \bigoplus_{i=1}^m N_{f_i^{\ell_i}}^k$ as $kQ$-modules.
\item $\sT(M_f^k)\vee\cI^k = \sT_k(V(f))$ holds.
\end{enumerate}
\end{lem}

\begin{proof}
(a) Clear from Chinese Remainder Theorem.

(b) We have $V(f)=\{f_1,\ldots,f_m\}$ and $M_f^k \simeq \bigoplus_{i=1}^m M_{f_i^{\ell_i}}^k$ by (a).
Thus the assertion holds.
\end{proof}

We denote by $E_d$ the identity matrix of size $d$.
\begin{lem}\label{lem-Mf=Nxf}
Let $f(x)=r_0 + r_1x + \dots + r_dx^d \in R[x]$ and $g(x):=x^df(x^{-1})=r_0x^d + r_1x^{d-1} + \dots + r_d$.
\begin{enumerate}[\rm(a)]
\item If $r_d\in R^{\times}$, then $M_f^R\simeq(R^d, R^d, E_d, A_f)$ and $N_f^R\simeq(R^d, R^d, A_f,E_d)$ as $RQ$-modules for $A_f:=\scalebox{0.7}{$\begin{bmatrix} 0 & 0 & \cdots &0& -r_0r_d^{-1}  \\ 1 & 0 & \cdots &0& -r_1r_d^{-1} \\ \vdots & \ddots & \ddots & \vdots & \vdots\\ 0 & \cdots & 1 &0& -r_{d-2}r_d^{-1} \\ 0 & \cdots & 0 & 1 & -r_{d-1}r_d^{-1} \end{bmatrix}$}$.
\item If $r_0, r_d\in R^{\times}$, then $M_f^R \simeq N_g^R$ as $RQ$-modules.
\item If $r_0\in R^\times$ and $r_d=0$, then $\left(R^d, R^d, \scalebox{0.7}{$\begin{bmatrix} 1 & 0 & \cdots &0 &0 \\ 0 & 1 & \cdots & 0 &0\\ \vdots & \vdots& \ddots & \vdots&\vdots \\ 0 &0& \cdots & 1 & 0 \\ 0 &0& \cdots & 0 & 0 \end{bmatrix}$},\scalebox{0.7}{$\begin{bmatrix} 0 & 0 & \cdots &0& -r_0  \\ 1 & 0 & \cdots &0& -r_1 \\ \vdots & \ddots &\ddots& \vdots &\vdots \\ 0 & \cdots & 1 &0& -r_{d-2}\\ 0 & \cdots &0& 1 & -r_{d-1}  \end{bmatrix}$}\right)\simeq N_g^R$ as $RQ$-modules.
\end{enumerate}
\end{lem}
\begin{proof}
(a) Since $r_d\in R^{\times}$, $R[x]/(f)$ is a free $R$-module with basis $1,x,x^2,\ldots,x^{d-1}$. Since $B$ is the representation matrix of the $R$-linear map $x:R[x]/(f)\to R[x]/(f)$, the assertion follows.

(b) Applying simultaneous elementary transformations of matrices, we have
\begin{align*}
 M_f^R & \stackrel{{\rm(a)}}{\simeq} \left(R^d, R^d, \scalebox{0.7}{$\begin{bmatrix} 1 & 0 & \cdots & 0 & r_1r_d^{-1}  \\ 0 & 1 & \cdots & 0 & r_2r_d^{-1} \\ \vdots & \vdots& \ddots & \vdots & \vdots\\ 0 & 0& \cdots & 1 & r_{d-1}r_d^{-1}  \\ 0 & 0 & \cdots & 0 & 1 \end{bmatrix}$}, \scalebox{0.7}{$\begin{bmatrix} 0 & 0 & \cdots &0& -r_0r_d^{-1}  \\ 1 & 0 & \cdots &0& 0 \\ \vdots & \ddots & \ddots & \vdots & \vdots\\ 0 & \cdots & 1 &0& 0 \\ 0 & \cdots & 0 & 1 & 0 \end{bmatrix}$}\right)
 \simeq \left(R^d, R^d, \scalebox{0.7}{$\begin{bmatrix} r_1r_d^{-1} & 1 & 0 & \cdots & 0  \\ r_2r_d^{-1} & 0 & 1 & \cdots & 0 \\ \vdots & \vdots & \ddots & \ddots & \vdots \\ r_{d-1}r_d^{-1} &  0 &  \cdots & 0 & 1  \\ 1 & 0 & \cdots & 0 & 0 \end{bmatrix}$}, \scalebox{0.7}{$\begin{bmatrix} -r_0r_d^{-1} & 0 & \cdots & 0  & 0 \\ 0 & 1 & \cdots & 0 &0  \\ \vdots & \vdots & \ddots & \vdots & \vdots \\ 0 & 0& \cdots & 1 & 0 \\ 0 & 0& \cdots & 0 & 1 \end{bmatrix}$}\right)\\
&\simeq \left(R^d, R^d, \scalebox{0.7}{$\begin{bmatrix} -r_0^{-1}r_1 & 1 & 0 & \cdots & 0  \\ -r_0^{-1}r_2 & 0 & 1 & \cdots & 0 \\ \vdots & \vdots & \ddots & \ddots & \vdots \\ -r_0^{-1}r_{d-1} &  0 &  \cdots & 0 & 1  \\ -r_0^{-1}r_d & 0 & \cdots & 0 & 0 \end{bmatrix}$}, E_d\right) 
\simeq \left(R^d, R^d, \scalebox{0.7}{$\begin{bmatrix} 0 & 0 & \cdots &0& -r_0^{-1}r_d  \\ 1 & 0 & \cdots &0& -r_0^{-1}r_{d-1} \\ \vdots & \ddots & \ddots & \vdots & \vdots\\ 0 & \cdots & 1 &0& -r_0^{-1}r_2 \\ 0 & \cdots & 0 & 1 & -r_0^{-1}r_1 \end{bmatrix}$}, E_d\right) \stackrel{{\rm(a)}}{\simeq} N^R_g.\end{align*}

(c) Applying simultaneous elementary transformations of matrices, the left-hand side is isomorphic to
\begin{align*}
 &\left(R^d, R^d, \scalebox{0.7}{$\begin{bmatrix} 1 & 0 & \cdots & 0 & r_1  \\ 0 & 1 & \cdots & 0 & r_2 \\ \vdots & \vdots& \ddots & \vdots & \vdots\\ 0 & 0& \cdots & 1 & r_{d-1}  \\ 0 & 0 & \cdots & 0 & 0 \end{bmatrix}$}, \scalebox{0.7}{$\begin{bmatrix} 0 & 0 & \cdots &0& -r_0  \\ 1 & 0 & \cdots &0& 0 \\ \vdots & \ddots & \ddots & \vdots & \vdots\\ 0 & \cdots & 1 &0& 0 \\ 0 & \cdots & 0 & 1 & 0 \end{bmatrix}$}\right)
 \simeq \left(R^d, R^d, \scalebox{0.7}{$\begin{bmatrix} r_1 & 1 & 0 & \cdots & 0  \\ r_2 & 0 & 1 & \cdots & 0 \\ \vdots & \vdots & \ddots & \ddots & \vdots \\ r_{d-1} &  0 &  \cdots & 0 & 1  \\ 0 & 0 & \cdots & 0 & 0 \end{bmatrix}$}, \scalebox{0.7}{$\begin{bmatrix} -r_0 & 0 & \cdots & 0  & 0 \\ 0 & 1 & \cdots & 0 &0  \\ \vdots & \vdots & \ddots & \vdots & \vdots \\ 0 & 0& \cdots & 1 & 0 \\ 0 & 0& \cdots & 0 & 1 \end{bmatrix}$}\right)\\
&\simeq \left(R^d, R^d, \scalebox{0.7}{$\begin{bmatrix} -r_0^{-1}r_1 & 1 & 0 & \cdots & 0  \\ -r_0^{-1}r_2 & 0 & 1 & \cdots & 0 \\ \vdots & \vdots & \ddots & \ddots & \vdots \\ -r_0^{-1}r_{d-1} &  0 &  \cdots & 0 & 1  \\ 0 & 0 & \cdots & 0 & 0 \end{bmatrix}$}, E_d\right) 
\simeq \left(R^d, R^d, \scalebox{0.7}{$\begin{bmatrix} 0 & 0 & \cdots &0& 0  \\ 1 & 0 & \cdots &0& -r_0^{-1}r_{d-1} \\ \vdots & \ddots & \ddots & \vdots & \vdots\\ 0 & \cdots & 1 &0& -r_0^{-1}r_2 \\ 0 & \cdots & 0 & 1 & -r_0^{-1}r_1 \end{bmatrix}$}, E_d\right) \stackrel{{\rm(a)}}{\simeq} N^R_g.\qedhere
\end{align*}
\end{proof}
We need one more preparation. For $f(x)=r_0 + r_1x + \dots + r_dx^d \in R[x]$ with $r_d\neq 0$, let
\[
L^{R}_f : = \left(R^d, R^d, \scalebox{0.7}{$\begin{bmatrix} 1 & 0 & \cdots &0 &0 \\ 0 & 1 & \cdots & 0 &0\\ \vdots & \vdots& \ddots & \vdots&\vdots \\ 0 &0& \cdots & 1 & 0 \\ 0 &0& \cdots & 0 & r_d \end{bmatrix}$},\scalebox{0.7}{$\begin{bmatrix} 0 & 0 & \cdots &0& -r_0  \\ 1 & 0 & \cdots &0& -r_1 \\ \vdots & \ddots &\ddots& \vdots &\vdots \\ 0 & \cdots & 1 &0& -r_{d-2}\\ 0 & \cdots &0& 1 & -r_{d-1}  \end{bmatrix}$}\right).
\]
\begin{lem}\label{lem-M'}
Let $R$ be a commutative ring, $k$ a field, and $\overline{(-)}:R\to k$ a morphism of rings.
Let $f(x)=r_0 + r_1x + \dots + r_dx^d \in R[x]$ such that $r_d\neq0$ and $\overline{f}\neq0$.
\begin{enumerate}[{\rm (a)}]
	\item $ k \otimes_R L^{R}_f \simeq M_{\ov{f}}^k \oplus M_{x^{j-d}}^k\in\cR^k$ as $kQ$-modules, where $j$ is the maximal integer satisfying $\ov{r_j}\neq 0$.
	\item $\sT(k \otimes_R L^{R}_f )\vee\cI^k = \sT_k(W(f))$.
\end{enumerate}
\end{lem}
\begin{proof}
(a) 
Let $i$ be the minimal integer such that $\ov{r_i} \neq 0$.
Then $0 \leq i \leq j \leq d$ and $\ov{f} = \ov{r_i}x^i + \ov{r_{i+1}}x^{i+1} + \dots + \ov{r_j}x^j$ hold. 
Let $g(x) :=x^d \ov{f}(x^{-1}) = \ov{r_i}x^{d-i} + \ov{r_{i+1}}x^{d-i-1} + \dots + \ov{r_j}x^{d-j}$ and
\[X:=\left(k^{d-i}, k^{d-i}, \scalebox{0.7}{$\begin{bmatrix} 1 & 0 & \cdots &0 &0 \\ 0 & 1 & \cdots & 0 &0\\ \vdots & \vdots& \ddots & \vdots&\vdots \\ 0 &0& \cdots & 1 & 0 \\ 0 &0& \cdots & 0 & 0 \end{bmatrix}$},\scalebox{0.7}{$\begin{bmatrix} 0 & 0 & \cdots &0& -\overline{r_i}  \\ 1 & 0 & \cdots &0& -\overline{r_{i+1}} \\ \vdots & \ddots &\ddots& \vdots &\vdots \\ 0 & \cdots & 1 &0& -\overline{r_{d-2}}\\ 0 & \cdots &0& 1 & -\overline{r_{d-1}}  \end{bmatrix}$}\right)\stackrel{{\rm\ref{lem-Mf=Nxf}(c)}}{\simeq} N_g^k.\]
The map $k^{d-i}\oplus k^{d-i}\to k^d\oplus k^d$, $(u,v)\mapsto\left(\scalebox{0.7}{$\begin{bmatrix}0\\ u\end{bmatrix}$},\scalebox{0.7}{$\begin{bmatrix}0\\ v\end{bmatrix}$}\right)$ gives an injective morphism $X\to k\otimes_RL^{R}_f$ of $kQ$-modules, and we have an exact sequence
\begin{align}\label{ses-M'}
0 \to N_g^k \to k\otimes_RL^{R}_f \to M_{x^i}^k \to 0
\end{align}
of $kQ$-modules. Since $-\overline{r_i}\neq0$, the second matrix of $N_g^k$ is invertible. Thus the $kQ$-modules $N_g^k$ and $M_{x^i}^k$ belong to different tubes in $\cR^k$, and hence $\Ext^1_{kQ}(M_{x^i}^k,N_g^k)=0$ holds. Thus the sequence \eqref{ses-M'} splits. Since $g=x^{d-j}\cdot x^{j}\ov{f}(x^{-1})$ holds and $x^{d-j}$ and $x^j\overline{f}(x^{-1})$ do not have a common factor, we have
\begin{align}\label{first}
k\otimes_RL^{R}_f \simeq N_g^k \oplus M_{x^i}^k\stackrel{{\rm\ref{lem-Mf=sum}(a)}}{\simeq} N_{x^{d-j}}^k \oplus N_{x^j\ov{f}(x^{-1})}^k \oplus M_{x^i}^k.
\end{align}
Since $h(x):=x^{-i}\ov{f}=\ov{r_i} + \ov{r_{i+1}}x+\dots+\ov{r_{j}}x^{j-i}$ satisfies $x^{j-i}h(x^{-1})=x^j\ov{f}(x^{-1})$, we have
\begin{align}\label{second}
N_{x^j\ov{f}(x^{-1})}^k=N_{x^{j-i}h(x^{-1})}^k\stackrel{{\rm\ref{lem-Mf=Nxf}(b)}}{\simeq}M_h^k=M_{x^{-i}\ov{f}}^k.
\end{align}
Thus we have 
\[k\otimes_RL^{R}_f \stackrel{\eqref{first}\eqref{second}}{\simeq} M_{x^{j-d}}^k \oplus M_{x^{-i}\ov{f}}^k \oplus M_{x^i}^k \stackrel{{\rm\ref{lem-Mf=sum}(a)}}{\simeq} M_{x^{j-d}}^k \oplus M_{\ov{f}}^k.\]

(b) $\sT(k \otimes_RL^{R}_f )\vee\cI^k \stackrel{{\rm (a)}}{=} \sT(M_{\ov{f}}^k \oplus M_{x^{j-d}}^k)\vee\cI^k\stackrel{{\rm\ref{lem-Mf=sum}(b)}}{=}\sT_k(V(\ov{f}))\vee\sT(M_{x^{j-d}}^k)=\sT_k(W(f))$ holds.
\end{proof}


Now we are ready to prove Theorem \ref{thm-rT=Tr}.

\begin{proof}[Proof of Theorem \ref{thm-rT=Tr}]
Fix $\cS \in \sP(|\bP_k^1|)$.

(I)
We prove ${\rm r}_{\m 0}(\sT_k(\cS)) \supseteq \sT_K({\rm r}(\cS))$.

Since $\sT_k(\cS)\supseteq\cI^k$, we have ${\rm r}_{\m,0}(\sT_k(\cS))\supseteq\cI^K$ by Propositions \ref{lem-rT-lift} and \ref{prop-tube-k-K}.
It suffices to show $M_f^K\in{\rm r}_{\m 0}(\sT_k(\cS))$ for each $f\in{\rm r}(\cS)$. 
We divide into two cases (I-i) $f=x^{-1}$, and (I-ii) $f\in\irr_RK[x]$.

(I-i) Assume $f=x^{-1}\in{\rm r}(\cS)$. Then $x^{-1} \in\cS$ holds by the definition of ${\rm r}(\cS)$.
In this case $k\otimes_RN^{R}_x = M_{x^{-1}}^k \in \sT_k(\cS)$ holds.
Thus $N^R_x\in\psi_\m(\sT_k(\cS))$ and hence $M_{x^{-1}}^K=K\otimes_RN_x^R \in {\rm r}_{\m 0}(\sT_k(\cS))$.

(i-ii) Assume $f\in\irr_RK[x]\cap{\rm r}(\cS)$.
Write $f(x) =r_0 + r_1x + \dots + r_d x^d$ with $r_d\neq0$.
Since $R$ is a discrete valuation ring and $f$ is primitive, we have $\overline{f}\neq0$.
Since $W(f) \subseteq \cS$, we have
\[k\otimes_RL^{R}_f \stackrel{{\rm\ref{lem-M'}(b)}}{\in} \sT_k(W(f)) \subseteq \sT_k(\cS)\]
and hence $L^R_f\in\psi_\m(\sT_k(\cS))$.
Applying Lemma \ref{lem-M'}(a) to the canonical inclusion $R\to K$, we have $K\otimes_RL^{R}_f=M_f^K$  and hence $M_f^K\in {\rm r}_{\m 0}(\sT_k(\cS))$.

(II)
We prove ${\rm r}_{\m 0}(\sT_k(\cS)) \subseteq \sT_K({\rm r}(\cS))$. 
Fix $M\in\mod RQ$ satisfying $k\otimes_R M \in \sT_k(\cS)$.  
It suffices to prove $K\otimes_R M \in\sT_K({\rm r}(\cS))$.
We may assume that $M$ is a free $R$-module of finite rank by replacing $M$ with its quotient by the $R$-torsion submodule.

(II-i)
We prove $K\otimes_R M \in\sT_K({\rm r}(\cS))$ in the case where $K\otimes_R M$ is indecomposable.
Assume $K\otimes_R M \not\in\sT_K({\rm r}(\cS))$.
Then either $K\otimes_R M \in \ind \cP^K$ or $K\otimes_R M \in \ind \cR^K$ holds.

We claim that there exists $N\in\mod RQ$ such that $K\otimes_R N\simeq K\otimes_R M$ and $k\otimes_R N \in (\cP^k \vee \cR^k)\setminus\sT_k(\cS)$.
Then by applying Lemma \ref{lem-XPHYT} for $(X, Y)=(N, M)$, we have $k\otimes_R M \notin \sT_k(\cS)$, a contradiction.

If $K\otimes_R M \in \ind\cP^K$, by Proposition \ref{prop-tube-k-K}, there exists $\alpha\in\Phi_{\rm pp}^{\rm rS}$ such that $K\otimes_R M\simeq M_\alpha^K$. Then $N:=M_\alpha^R$ satisfies $K\otimes_R N\simeq K\otimes_R M$ and $k\otimes_R N=M_\alpha^k\in\ind\cP^k\subset(\cP^k \vee \cR^k)\setminus\sT_k(\cS)$, as desired.

If $K\otimes_R M \in \ind\cR^K$, there exist $f\in \irr_R K[x]$ and $\ell\ge1$ such that $K\otimes_R M \simeq M_{f^\ell}^K$.
Then $N:=L_{f^\ell}^R\in\mod RQ$ satisfies $K\otimes_R N \simeq M_{f^\ell}^K \simeq K\otimes_R M$.
Since $K\otimes_R M \not\in\sT_K({\rm r}(\cS))$, we have $f \not\in {\rm r}(\cS)$ and $W(f^\ell)=W(f)\not\subseteq\cS$. By Lemma \ref{lem-M'}(b), we have $k\otimes_R N\in \cR^k$ and $\sT(k\otimes_R N) = \sT_k(W(f^\ell))\not\subseteq \sT_k(\cS)$. Thus $k\otimes_R N \in\cR^k\setminus \sT_k(\cS)$ holds, as desired.

(II-ii)
We prove $K\otimes_RM \in\sT_K({\rm r}(\cS))$ in general.
It suffices to show that each indecomposable direct summand $X$ of $K\otimes_R M$ belongs to $\sT_K({\rm r}(\cS))$.
By \cite[Lemma 2.3(a)]{Iyama-Kimura}, there exists a factor module $N$ of $M$ such that $X\simeq K \otimes_R N$.
Since $k \otimes_RN\in \gen(k\otimes_R M) \subseteq \sT_k(\cS)$, we can apply (II-i) to $M:=N$ to get $X \simeq K \otimes_R N \in \sT_K({\rm r}(\cS))$, as desired.
\end{proof}

\section*{Acknowledgements}
The first author is supported by JSPS Grant-in-Aid for Scientific Research (B) 22H01113, (B) 23K22384, (C) 18K03209. The second author is supported by Grant-in-Aid for JSPS Fellows JP22J01155.

\end{document}